\newcommand{\ol}{\overline}
\newcommand{\trdeg}{\mathop{\mathrm{tr.\, deg}}\nolimits}
\newcommand{\gen}[1]{\langle #1 \rangle}
\newcommand{\eq}{\mathrm{Eq}}
\newtheorem{theorem}{Theorem}
\newtheorem{corollary}[theorem]{Corollary}
\newtheorem{lemma}[theorem]{Lemma}
\newtheorem{proposition}[theorem]{Proposition}
\theoremstyle{definition}
\newtheorem{definition}[theorem]{Definition}
\theoremstyle{remark}
\newtheorem*{notation}{Notation}
\newtheorem*{remark}{Remark}
\newenvironment{acknowledgement}{\bigskip\noindent \textit{Acknowledgement.}}{\bigskip}
\title{Differential transcendence of solutions of difference Riccati equations
and Tietze's treatment}
\author{Seiji Nishioka}
\begin{document}

\maketitle

\begin{small}
  \noindent
  {\bf Abstract}\quad
  There is the paper by H. Tietze published in 1905
  on differential transcendence of solutions of difference Riccati equations.
  In this paper, we clarify the essence of Tietze's treatment and make it 
  purely algebraic.
  As an application, the $q$-Airy equation is studied.
  
  \bigskip\noindent
  MSC2010:
  Primary 
  12H10; 
  Secondary
  12H05, 
  39A05, 
  39A13. 

  \bigskip\noindent
  Keywords: 
  differential transcendence,
  transcendental transcendence,
  hypertranscendence,
  difference algebra, differential algebra,
  $q$-Airy equation.
\end{small}

\section{Introduction}

These days, several authors study Galois theories on linear difference equations
of higher order than 1, and show results on differentially algebraic character
of solutions especially for equations of second order 
(cf. the papers \cite{DHR,HS2008}).
Historically, there is the paper \cite{Tietze1905} by H. Tietze published in
1905,
which is on differential transcendence of solutions of difference Riccati equations.
The relation between a difference Riccati equation and a second-order
linear difference equation is similar to that of a Riccati equation and
a second-order linear differential equation.
He proposed his normal form 
\[
  \varphi(x+1)=\frac{r(x)}{\varphi(x)}+1
\]
of a difference Riccati equation
\[
  \varphi(x+1)=\frac{A(x)\varphi(x)+B(x)}{C(x)\varphi(x)+D(x)},
\]
and studied differential transcendence of solutions 
when $r(x)$ is a rational function with $\lim_{x\to\infty}r(x)=0$. 
His theorem says that if there is no rational function solution, 
there is no solution satisfying an algebraic differential equation.

In this paper, we clarify the essence of Tietze's treatment and make it 
purely algebraic. As a result, the transform $\varphi(x+1)$ of $\varphi(x)$ 
appeared in the Riccati equation
can be replaced with $\varphi(qx)$, $\varphi(x^2)$, etc.
Here, we call them $\tau(\varphi)$.
Theorem \ref{2017-04-28p1a}, the main theorem, and Proposition \ref{2017-05-25p1a}
say that existence of a differentially algebraic solution is closely related to
existence of a rational function solution $g$ of the third-order linear difference equation
\[
  \begin{aligned}
    &\tau^2(sr)\tau(s)s\tau^3(g)+(\tau(r)+1)\tau(s)s\tau^2(g)\\
    &\quad -(\tau(r)+1)s\tau(g)-rg+s\tau(D(r)/r)=0,
  \end{aligned}
\]
where $(d/dx)\tau(g)=s\tau((d/dx)g)$. For example, $s=1$ when $\tau(g)=g(x+1)$,
$s=q$ when $\tau(g)=g(qx)$, and $s=2x$ when $\tau(g)=g(x^2)$.
In Corollary \ref{2017-07-17p1a}, we shall introduce an application, which 
implies in rough sense that a solution of the Riccati equation 
\[
  y(qt)=-qt+\frac{1}{y}
\]
associated with the $q$-Airy equation
\[
  y(q^2t)+qty(qt)-y(t)=0
\]
has no differentially algebraic relation with solutions of 
linear difference equations when $q$ is not a root of unity.

\begin{notation}
 Throughout the paper every field is of characteristic zero.
 When $K$ is a field and $\tau$ is an isomorphism of $K$ into itself,
 namely an injective endomorphism, the pair $\mathcal{K}=(K,\tau)$ is
 called a difference field. 
 We call $\tau$ the (transforming) operator and $K$ the underlying
 field.
 For a difference field $\mathcal{K}$, $K$ often denotes its underlying field.
 For $a\in K$, the element $\tau^na\in K$ ($n\in \mathbb{Z}$), if it exists,
 is called the $n$-th transform of $a$ and is
 sometimes denoted by $a_n$. 
 If $\tau K=K$, we say that $\mathcal{K}$ is inversive.
 %
 For an algebraic closure $\ol{K}$ of $K$, the transforming operator $\tau$
 is extended to an isomorphism $\ol{\tau}$ of $\ol{K}$ into itself,
 not necessarily in a unique way 
 (cf. the book \cite{ZS1958}, Ch. II, \S14, Theorem 33).
 We call the difference field $(\ol{K},\ol{\tau})$ 
 an algebraic closure of $\mathcal{K}$.
 For difference fields $\mathcal{K}=(K,\tau)$ and
 $\mathcal{K}'=(K',\tau')$, $\mathcal{K}'/\mathcal{K}$ is called a
 difference field extension if $K'/K$ is a field extension and
 $\tau'|_K=\tau$. 
 In this case, we say that $\mathcal{K}'$ is a difference overfield of
 $\mathcal{K}$ and that $\mathcal{K}$ is a difference subfield of
 $\mathcal{K}'$.
 For brevity we sometimes use $(K,\tau')$ instead of $(K,\tau'|_K)$. 
 We define a difference intermediate field in the proper way.
 Let $\mathcal{K}$ be a difference field,
 $\mathcal{L}=(L,\tau)$ a difference overfield of $\mathcal{K}$
 and $B$ a subset of $L$.
 The difference subfield $\mathcal{K}\gen{B}_{\mathcal{L}}$ of
 $\mathcal{L}$ is defined to be the difference field
 $(K(B,\tau B,\tau^2 B,\dots),\tau)$ and is denoted by $\mathcal{K}\gen{B}$
 for brevity.
 A solution of a difference equation over $\mathcal{K}$ 
 is defined to be
 an element of some difference overfield of $\mathcal{K}$
 which satisfies the equation (cf. the books \cite{Cohn,Levin2008}).

  
  For a field $K$, a derivation $D$ of $K$, 
  which is a mapping $D\colon K\to K$ such that $D(a+b)=D(a)+D(b)$
  and $D(ab)=D(a)b+aD(b)$, and a transforming operator $\tau$ 
  of $K$, we simply call the pair $\mathcal{K}=(K,D,\tau)$ a DTC field,
  though some relation between
  the derivation and the transforming operator may be given.
  A DTC overfield $\mathcal{K}'=(K',D',\tau')$ of $\mathcal{K}$ 
  is a differential overfield on $D'$, namely a field extension $K'/K$ with
  $D'|_K=D$, 
  and a difference overfield on $\tau'$.
  Let $\mathcal{L}/\mathcal{K}$ be a DTC field extension and 
  $B$ a subset of $L$.
  The DTC subfield $\mathcal{K}\gen{B}_\mathcal{L}$ of $\mathcal{L}$ is
  defined to be 
  the minimum DTC subfield containing $K$ and $B$.
  
  Let $\mathcal{L}/\mathcal{K}$ be a differential field extension, and 
  $D$ the derivation of $\mathcal{L}$.
  When $f\in L$ satisfies $\trdeg K(f,Df,D^2f,\dots)/K<\infty$,
  we say that $f\in \mathcal{L}$ is differentially algebraic over $K$.
  It is equivalent that $f$ satisfies a certain algebraic differential equation
  over $\mathcal{K}$ in $\mathcal{L}$.
  
  
  Let $F/K$ be an algebraic function field of one variable.
  A place $P$ of $F/K$ is the maximal ideal of some valuation ring 
  of $F/K$. The valuation ring and the normalized discrete valuation 
  associated with $P$ is denoted by $\mathcal{O}_P$ and $v_P$,
  respectively.
  A discrete valuation of $F/K$ is a function 
  $v\colon F\to \mathbb{Z}\cup\{\infty\}$
  with the following properties.
  \begin{enumerate}
  \item $v(x)=\infty\iff x=0$.
  \item $v(xy)=v(x)+v(y)$ for all $x,y\in F$.
  \item $v(x+y)\geq\min\{v(x),v(y)\}$ for all $x,y\in F$.
  \item There exists an element $z\in F^\times$ with $v(z)\neq 0$
    ($v(z)=1$ for a normalized discrete valuation).
  \item $v(a)=0$ for all $0\neq a\in K$.
  \end{enumerate}
  For a rational function field $K(x)/K$, $P_\alpha$ ($\alpha\in K$), denotes
  the place which has the prime element $x-\alpha$ and $P_\infty$
  the place which has the prime element $1/x$.
\end{notation}

In Section \ref{normal}, we shall introduce Tietze's normal form of difference 
Riccati equation and the way how to obtain it. 
In Section \ref{lemmas}, we shall introduce several lemmas used in Section \ref{theorem} 
to prove the main theorem.
In Section \ref{application}, an application to $q$-Airy equation is
studied.

\section{Tietze's normal form of difference Riccati equation}\label{normal}

For a second-order linear difference equation,
\[
  y_2+ay_1+by=0,
\]
by setting $z=y_1/y$, we obtain the following first-order difference equation,
\[
  z_1=\frac{-az-b}{z}.
\]
We call this the difference Riccati equation associated with the above equation.
By iterations, we can express $z_i$ in terms of $z$ such as
\[
  z_2=\frac{(a_1a-b_1)z+a_1b}{-az-b}.
\]
Here, we introduce a notation about those iterations.

Let $\mathcal{L}=(L,\tau)$ be a difference field, and let
\begin{gather*}
  A=\begin{pmatrix}a&b\\c&d\end{pmatrix}\in \mathrm{M}_2(L),\\
  A_i=\begin{pmatrix}a^{(i)} & b^{(i)} \\ c^{(i)} & d^{(i)}\end{pmatrix}
  =(\tau^{i-1}A)(\tau^{i-2}A)\cdots(\tau A)A\quad (i=1,2,\dots).
\end{gather*}
In this paper, $\eq(A,i)/\mathcal{L}$ denotes the equation over $\mathcal{L}$,
\[
  y_i(c^{(i)}y+d^{(i)})=a^{(i)}y+b^{(i)}.
\]
We easily see the following.

\begin{lemma}
  If $f$ is a solution of $\eq(A,k)/\mathcal{L}$ in a difference field extension
  $\mathcal{U}/\mathcal{L}$,
  $f\in\mathcal{U}$ is also a solution of $\eq(A,ki)/\mathcal{L}$
  $(i=1,2,\dots)$.
\end{lemma}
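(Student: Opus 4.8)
The plan is to rewrite $\eq(A,m)/\mathcal{L}$ in matrix--vector form and then exploit the block factorization
\[
  A_{ki}=(\tau^{(i-1)k}A_k)(\tau^{(i-2)k}A_k)\cdots(\tau^{k}A_k)A_k,
\]
which is immediate from the definition of $A_m$ together with the fact that each $\tau^{j}$ is a ring homomorphism on $\mathrm{M}_2$, hence acts entrywise and multiplicatively on products of matrices; in particular $A_{k(i+1)}=(\tau^{ki}A_k)A_{ki}$.

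First I would record an equivalent form of the equation. Let $\mathcal{U}=(U,\tau)/\mathcal{L}$ be a difference field extension, let $f\in U$, write $f_m=\tau^m f$, and set $v=\begin{pmatrix}f\\1\end{pmatrix}$. Then $f$ is a solution of $\eq(A,m)/\mathcal{L}$ in $\mathcal{U}$ if and only if there exists $\lambda\in U$ with $A_m v=\lambda\begin{pmatrix}f_m\\1\end{pmatrix}$. Indeed, if such $\lambda$ exists, comparing the two coordinates gives $a^{(m)}f+b^{(m)}=\lambda f_m$ and $c^{(m)}f+d^{(m)}=\lambda$, whence $f_m(c^{(m)}f+d^{(m)})=a^{(m)}f+b^{(m)}$; conversely, if $f$ solves $\eq(A,m)/\mathcal{L}$, take $\lambda=c^{(m)}f+d^{(m)}$ when this is nonzero, and observe that when $c^{(m)}f+d^{(m)}=0$ the equation forces $a^{(m)}f+b^{(m)}=0$ as well, so $\lambda=0$ works. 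This small case distinction --- the possible vanishing of $c^{(m)}f+d^{(m)}$ --- is the only point I expect to need any care; everything else is purely formal.

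Then I would induct on $i$. The case $i=1$ is the hypothesis: $A_k v=\lambda\begin{pmatrix}f_k\\1\end{pmatrix}$ for some $\lambda\in U$. Assuming $A_{ki}v=\mu\begin{pmatrix}f_{ki}\\1\end{pmatrix}$ for some $\mu\in U$, apply $\tau^{ki}$ entrywise to $A_k v=\lambda\begin{pmatrix}f_k\\1\end{pmatrix}$; using $\tau^{ki}f=f_{ki}$ and $\tau^{ki}f_k=f_{k(i+1)}$ this gives $(\tau^{ki}A_k)\begin{pmatrix}f_{ki}\\1\end{pmatrix}=(\tau^{ki}\lambda)\begin{pmatrix}f_{k(i+1)}\\1\end{pmatrix}$. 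Combining with $A_{k(i+1)}=(\tau^{ki}A_k)A_{ki}$,
\[
  A_{k(i+1)}v=(\tau^{ki}A_k)A_{ki}v=\mu(\tau^{ki}A_k)\begin{pmatrix}f_{ki}\\1\end{pmatrix}=\mu(\tau^{ki}\lambda)\begin{pmatrix}f_{k(i+1)}\\1\end{pmatrix},
\]
so by the equivalent form above $f$ is a solution of $\eq(A,k(i+1))/\mathcal{L}$, which closes the induction. I do not anticipate a genuine obstacle here: the entire content is the matrix reformulation of $\eq(A,m)$ together with the block factorization of $A_{ki}$.
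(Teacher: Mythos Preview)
Your argument is correct. The matrix--vector reformulation of $\eq(A,m)$ as $A_m\begin{pmatrix}f\\1\end{pmatrix}=\lambda\begin{pmatrix}f_m\\1\end{pmatrix}$ is sound (including the degenerate case $c^{(m)}f+d^{(m)}=0$, which you handled), the block identity $A_{k(i+1)}=(\tau^{ki}A_k)A_{ki}$ follows immediately from the definition since $\tau$ acts as a ring endomorphism on $\mathrm{M}_2(U)$, and the induction closes cleanly.

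As for comparison: the paper gives no proof at all---it simply says ``We easily see the following'' and moves on. What you have done is essentially the formalization the author had in mind. An alternative, using the notation the paper introduces immediately afterwards, is to observe that $\eq(\tau^{jk+k}f;\tau^{jk}A_k;\tau^{jk}f)$ holds for each $j\geq 0$ (this is just $\tau^{jk}$ applied to the hypothesis) and then iterate Lemma~\ref{2017-04-25pA2} on composition of $\eq(\,\cdot\,;\,\cdot\,;\,\cdot\,)$. That route and yours are the same computation in different packaging: your vector equation $A_m v=\lambda\begin{pmatrix}f_m\\1\end{pmatrix}$ is precisely $\eq(f_m;A_m;f)$, and your inductive multiplication by $\tau^{ki}A_k$ is precisely one application of Lemma~\ref{2017-04-25pA2}.
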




We also define $\eq(f;A;g)$ for $f,g\in L$, which denotes the equality,
\[ f(cg+d)=ag+b. \]

\begin{lemma}\label{2017-04-25pA2}
  When both $\eq(f;A;g)$ and $\eq(g;B;h)$ hold, $\eq(f;AB;h)$ holds.
\end{lemma}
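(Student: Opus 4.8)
The plan is to read $\eq(f;A;g)$ as the cleared-denominator form of the fractional linear action $f=(ag+b)/(cg+d)$, so that the assertion amounts to the compatibility of this action with matrix multiplication. Since the relations $\eq(\cdot;\cdot;\cdot)$ are polynomial identities that stay meaningful even when the ``denominators'' vanish, I would not pass through quotients at all, but instead derive $\eq(f;AB;h)$ directly by manipulating the two hypothesis equalities.

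Concretely, write $A=\begin{pmatrix}a&b\\c&d\end{pmatrix}$ and $B=\begin{pmatrix}p&q\\r&s\end{pmatrix}$, so that $AB=\begin{pmatrix}ap+br&aq+bs\\cp+dr&cq+ds\end{pmatrix}$. The hypotheses are $f(cg+d)=ag+b$ and $g(rh+s)=ph+q$. The key step is to multiply the first equality by $rh+s$, obtaining $f\bigl(c\,g(rh+s)+d(rh+s)\bigr)=a\,g(rh+s)+b(rh+s)$, and then to substitute $g(rh+s)=ph+q$ from the second hypothesis. Expanding and collecting the coefficients of $h$ and of $1$ gives
\[
  f\bigl((cp+dr)h+(cq+ds)\bigr)=(ap+br)h+(aq+bs),
\]
which is precisely $\eq(f;AB;h)$.

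I expect essentially no obstacle here: the argument uses only the ring axioms and never divides by anything, which is exactly why the relations are stated in cleared form; the only mild care needed is bookkeeping of the four entries of $AB$. It is worth noting that the same computation, iterated, also lies behind the earlier iteration lemma, since $\eq(A,i)$ is assembled from the matrices $A_i=(\tau^{i-1}A)\cdots(\tau A)A$, and repeated application of this lemma (with $B$ running over the successive transforms of $A$) yields $\eq(f;A_i;\tau^i f)$ from $\eq(f;A;\tau f)$.
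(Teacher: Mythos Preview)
Your argument is correct and is exactly the straightforward verification the paper has in mind; the paper itself simply states ``This is obvious'' and gives no further proof. One small slip in your closing aside: in the paper's conventions a solution of $\eq(A,1)$ satisfies $\eq(\tau f;A;f)$, so iterating the lemma yields $\eq(\tau^i f;A_i;f)$ rather than $\eq(f;A_i;\tau^i f)$.
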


This is obvious.
We shall show another lemma.

\begin{lemma}\label{2017-05-11p1a}
  Let $A\in \mathrm{M}_2(L)$ and $P\in \mathrm{GL}_2(L)$.
  Let $\mathcal{U}=(U,\tau)$ be a difference overfield of $\mathcal{L}$
  and $f\in\mathcal{U}$ a solution of $\eq(A,i)/\mathcal{L}$.
  Define $g\in L(f)$ as the element satisfying $\eq(g;P;f)$
  which uniquely exists.
  Then we find
  \begin{enumerate}
  \item $\eq(f;P^{-1};g)$ holds,
  \item $g\in\mathcal{U}$ is a solution of $\eq((\tau P)AP^{-1},i)/\mathcal{L}$.
  \end{enumerate}
\end{lemma}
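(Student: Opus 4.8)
The plan is to deduce both parts from the multiplicativity Lemma~\ref{2017-04-25pA2}, together with a telescoping identity for the matrices attached to $(\tau P)AP^{-1}$. For (i), write $P=\begin{pmatrix}a&b\\c&d\end{pmatrix}$. Since the matrix in $\eq(f;A;g)$ may be multiplied by any nonzero scalar without changing the relation, and $P^{-1}$ equals $(ad-bc)^{-1}$ times $\begin{pmatrix}d&-b\\-c&a\end{pmatrix}$, the relation $\eq(f;P^{-1};g)$ is equivalent to $f(-cg+a)=dg-b$, i.e.\ to $af+b=g(cf+d)$, which is precisely $\eq(g;P;f)$. The latter holds by the choice of $g$, so (i) follows. (Alternatively, one could introduce the unique $g'\in L(f)$ with $\eq(f;P^{-1};g')$ and apply Lemma~\ref{2017-04-25pA2} to $\eq(g;P;f)$ and $\eq(f;P^{-1};g')$ to get $\eq(g;PP^{-1};g')$, i.e.\ $g=g'$; existence of $g$ and $g'$ is guaranteed by $P\in\mathrm{GL}_2(L)$.)

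For (ii), the first step is the telescoping identity $M_i=(\tau^i P)\,A_i\,P^{-1}$, where $M:=(\tau P)AP^{-1}$ and, for a matrix $N$, $N_i:=(\tau^{i-1}N)\cdots(\tau N)N$ as in the definition of $\eq(N,i)/\mathcal{L}$. It follows from $\tau^{j}M=(\tau^{j+1}P)(\tau^{j}A)(\tau^{j}P)^{-1}$ (using $\tau^j(P^{-1})=(\tau^jP)^{-1}$): in the product $(\tau^{i-1}M)\cdots(\tau M)M$ the adjacent pairs $(\tau^{j}P)^{-1}(\tau^{j}P)$ collapse, leaving $(\tau^i P)(\tau^{i-1}A)\cdots(\tau A)A\,P^{-1}=(\tau^i P)A_iP^{-1}$.

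The second step combines three relations by two applications of Lemma~\ref{2017-04-25pA2}. Applying the injective endomorphism $\tau^i$ of $U$ to the defining relation $\eq(g;P;f)$ gives $\eq(\tau^i g;\tau^i P;\tau^i f)$; the hypothesis that $f$ solves $\eq(A,i)/\mathcal{L}$ says $\eq(\tau^i f;A_i;f)$; and part (i) gives $\eq(f;P^{-1};g)$. Chaining the first two yields $\eq(\tau^i g;(\tau^i P)A_i;f)$, and chaining this with the third yields $\eq(\tau^i g;(\tau^i P)A_iP^{-1};g)$, which by the telescoping identity says exactly that $g\in\mathcal{U}$ solves $\eq((\tau P)AP^{-1},i)/\mathcal{L}$. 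I do not expect a genuine obstacle here; the only points needing a little care are the index bookkeeping in the telescoping identity and checking that $g$ (and $g'$, in the alternative argument for (i)) actually lies in $L(f)$, which is exactly where the hypothesis $P\in\mathrm{GL}_2(L)$ enters.
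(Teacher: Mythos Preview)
Your proof is correct and follows essentially the same approach as the paper's: for (i) you rewrite $\eq(g;P;f)$ as $f(-cg+a)=dg-b$ and identify this with $\eq(f;P^{-1};g)$ via the adjugate form of $P^{-1}$, and for (ii) you chain $\eq(\tau^i g;\tau^i P;\tau^i f)$, $\eq(\tau^i f;A_i;f)$, and $\eq(f;P^{-1};g)$ through Lemma~\ref{2017-04-25pA2} and then verify the telescoping identity $((\tau P)AP^{-1})_i=(\tau^i P)A_iP^{-1}$, exactly as the paper does. The only cosmetic differences are your remark that scalar multiples of a matrix define the same relation $\eq(\cdot;\cdot;\cdot)$ (the paper instead divides through by $ad-bc$) and your optional alternative argument for (i).
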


\begin{proof}
  {\rm (i)}
  If we use the expression
  \[ P=\begin{pmatrix}a&b\\c&d\end{pmatrix}, \]
  then $P^{-1}$ is expressed as
  \[ P^{-1}=\frac{1}{ad-bc}\begin{pmatrix}d&-b\\-c&a\end{pmatrix}. \]
  Since $\eq(g;P;f)$ denotes
  $g(cf+d)=af+b$,
  the following holds,
  \[ f(-cg+a)=dg-b. \]
  Dividing by $ad-bc$, we find $\eq(f;P^{-1};g)$.
  
  {\rm (ii)} Since $f\in\mathcal{U}$ is a solution of $\eq(A,i)/\mathcal{L}$,
  $f$ satisfies $\eq(\tau^if;A_i;f)$.
  Applying $\tau^i$ to $\eq(g;P;f)$, we obtain
  $\eq(\tau^ig;\tau^iP;\tau^if)$. Hence by Lemma \ref{2017-04-25pA2} 
  we find $\eq(\tau^ig;(\tau^iP)A_iP^{-1};g)$.
  If we let $B=(\tau P)AP^{-1}$, then
  \[
  \begin{aligned}
    B_i&=(\tau^{i-1}B)(\tau^{i-2}B)\cdots B\\
    &=(\tau^iP)(\tau^{i-1}A)(\tau^{i-1}P^{-1})(\tau^{i-1}P)(\tau^{i-2}A)(\tau^{i-2}P^{-1})\cdots(\tau P)AP^{-1}\\
    &=(\tau^iP)A_iP^{-1}.
  \end{aligned}
  \]
  Therefore $g\in\mathcal{U}$ is a solution of $\eq(B,i)/\mathcal{L}$.
\end{proof}

Tietze's normal form of a difference Riccati equation is
\[
  y_1=1+\frac{r}{y}, \quad r\in L^\times.
\]
We shall introduce the way how to transform a difference Riccati equation
into this form.

\begin{proposition}\label{2017-05-11p3a}
  Define 4 forms of matrices 
  $A=\begin{pmatrix}a&b\\c&d\end{pmatrix}\in\mathrm{GL}_2(L)$
  as follows:
  \begin{itemize}
  \item[{\rm (F1)}] $c\neq 0$.
  \item[{\rm (F2)}] $A=\begin{pmatrix}a & b\\ 1 & 0\end{pmatrix}$, $a\neq 0$.
  \item[{\rm (F3)}] $A=\begin{pmatrix}0 & b\\ 1 & 0\end{pmatrix}$, $\tau b\neq b$.
  \item[{\rm (F4)}] $A=\begin{pmatrix}0 & b\\ 1 & 0\end{pmatrix}$, $\tau b=b$.
  \item[{\rm (FT)}] $A=e\begin{pmatrix}1 & r\\ 1 & 0\end{pmatrix}$, where $e,r\in L^\times$.
    This is ``associated'' with Tietze's normal form.
  \end{itemize}
  Then the following hold.
  \begin{enumerate}
  \item For a matrix of {\rm(F1)}, 
    by $P=\begin{pmatrix}c&d\\0&1\end{pmatrix}\in\mathrm{GL}_2(L)$, it becomes
    \[ (\tau P)AP^{-1}=\frac{1}{c}\begin{pmatrix}a\tau(c)+c\tau(d)&-(ad-bc)\tau(c)\\c&0\end{pmatrix},\]
    which is of {\rm(F2)}, {\rm(F3)} or {\rm(F4)}.
  \item For a matrix of {\rm(F2)},
    by $P=\begin{pmatrix}a&b\\a&0\end{pmatrix}\in\mathrm{GL}_2(L)$, it becomes
    \[ (\tau P)AP^{-1}=\tau(a)\begin{pmatrix}1&(1/a)\tau(b/a)\\1&0\end{pmatrix}, \]
    which is of {\rm(FT)}.
  \item For a matrix of {\rm(F3)},
    $P=\begin{pmatrix}1&b\\1&1\end{pmatrix}$ is a regular matrix 
    because of $b\neq 1$. In this case, 
    \[ (\tau P)AP^{-1}=\frac{1}{1-b}\begin{pmatrix}\tau(b)-b&b(1-\tau(b))\\1-b&0\end{pmatrix} \]
    is of {\rm(F2)}.
  \item When a matrix of {\rm(F1)} turns into a matrix of {\rm(FT)} by repeated applications of
    {\rm (i)}, {\rm (ii)}, and/or {\rm (iii)},
    there exist $P_1,\dots,P_n\in\mathrm{GL}_2(L)$ such that
    \[
      (\tau P_n)\cdots(\tau P_1)AP_1^{-1}\cdots P_n^{-1}
      =e\begin{pmatrix}1&r\\1&0\end{pmatrix},
    \]
    and therefore $(\tau Q)AQ^{-1}=e\begin{pmatrix}1&r\\1&0\end{pmatrix}$
    for some $Q\in\mathrm{GL}_2(L)$.
  \item When a matrix of {\rm(F1)} turns into a matrix of {\rm(F4)} 
    by an application of {\rm (i)},
    $B=(\tau P)AP^{-1}=\begin{pmatrix}0&r\\1&0\end{pmatrix}$ $(\tau r=r)$
    satisfies
    \[
      B_2=(\tau B)B=r\begin{pmatrix}1&0\\0&1\end{pmatrix},
    \]
    which is ``associated'' with the difference equation
    $y_2=y$.
  \end{enumerate}
\end{proposition}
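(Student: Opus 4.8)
The plan is to treat (i)--(iii) as three independent, completely explicit $2\times 2$ matrix computations, (iv) as a purely formal bookkeeping step together with a short case analysis, and (v) as one more direct computation. No Galois-theoretic or analytic input is needed; Lemma \ref{2017-05-11p1a} serves only to motivate the statement, not to prove it.

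For each of (i), (ii), (iii) I would substitute the prescribed $P$ into $(\tau P)AP^{-1}$, where $\tau P$ is obtained by applying $\tau$ entrywise and $P^{-1}$ from the adjugate formula, and multiply out. First I would record that $P\in\mathrm{GL}_2(L)$ in each case: for (ii) a matrix of (F2) has $\det A=-b\neq 0$, hence $b\neq 0$ and $\det P=-ab\neq 0$; for (iii) the hypothesis $\tau b\neq b$ forces $b\neq 1$ (otherwise $\tau b=\tau 1=1=b$), hence $\det P=1-b\neq 0$. After multiplying out, the output always has a zero in its $(2,2)$-entry while the scalar multiplying the first column cancels, so the $(2,1)$-entry is exactly $1$; thus the output literally has the shape $\begin{pmatrix}a'&b'\\1&0\end{pmatrix}$ or $e\begin{pmatrix}1&r\\1&0\end{pmatrix}$. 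Then I would verify the side conditions: in (i) one has $b'=-(ad-bc)\tau(c)/c\neq 0$ because $A\in\mathrm{GL}_2(L)$ and $\tau(c)\neq 0$, and the three sub-cases correspond exactly to $a'\neq 0$ (F2), $a'=0$ with $\tau b'\neq b'$ (F3), $a'=0$ with $\tau b'=b'$ (F4); in (ii) one gets $e=\tau(a)\neq 0$ and $r=(1/a)\tau(b/a)\neq 0$ since $a,b\neq 0$; in (iii) the $(1,1)$-entry of the output is $(\tau b-b)/(1-b)$, nonzero by (F3), so the output is genuinely of (F2).

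For (iv) the key point is that $\tau$ is a field isomorphism, hence multiplicative on matrices, so $\tau(P_n\cdots P_1)=(\tau P_n)\cdots(\tau P_1)$ and $(P_n\cdots P_1)^{-1}=P_1^{-1}\cdots P_n^{-1}$; therefore composing the conjugations $A\mapsto(\tau P)AP^{-1}$ is again a conjugation of the same shape with $Q=P_n\cdots P_1\in\mathrm{GL}_2(L)$. By (i)--(iii) a matrix of (F1) can only evolve along one of the two admissible finite chains (F1)$\to$(F2)$\to$(FT) or (F1)$\to$(F3)$\to$(F2)$\to$(FT) (the branch to (F4) being covered by (v)), so such $P_1,\dots,P_n$, and hence $Q$, exist whenever the process reaches (FT). For (v) I would compute $B_2=(\tau B)B$ directly: since $\tau r=r$ we have $\tau B=B=\begin{pmatrix}0&r\\1&0\end{pmatrix}$ and $B^2=r\begin{pmatrix}1&0\\0&1\end{pmatrix}$, and then $\eq(B,2)/\mathcal{L}$ reads $ry_2=ry$, i.e.\ $y_2=y$.

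I do not expect a genuine obstacle here. The only points demanding care are the invertibility of each $P$, the verification that every output matrix actually satisfies the side conditions of (F2)/(F3)/(F4)/(FT) rather than degenerating to a different form, and, in (iv), confirming that the displayed list of admissible chains is exhaustive and terminates.
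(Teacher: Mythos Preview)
Your proposal is correct and matches the paper's approach: the paper's entire proof is the single sentence ``The proof is straightforward,'' and what you have outlined is precisely the direct matrix verification that sentence refers to. Your added care about the invertibility of each $P$ and the side conditions on the output forms is exactly what a full write-up would require.
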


The proof is straightforward.

\section{Lemmas}\label{lemmas}

Let $\mathcal{F}=(F,\tau_0)$ be a difference field and
$F/K$ an algebraic function field of one variable. 
Let $A=\begin{pmatrix}1&r\\1&0\end{pmatrix}\in\mathrm{GL}_2(F)$ and
suppose there exists a place $P$ of $F/K$ such that $v_P(\tau_0^ir)>0$ for
all $i\geq 0$.
We also suppose that $\eq(A,i)/\mathcal{F}$ has no solution algebraic over $F$
for any $i\geq 1$.
Let $Y$ be a solution of $\eq(A,1)/\mathcal{F}$, which is transcendental 
over $F$, and let $\mathcal{F}\gen{Y}=(F(Y),\tau)$.

\begin{lemma}\label{2017-04-25p1a}
  If $N\in F[Y]\setminus\{0\}$ satisfies
  \[
    N=\alpha Y^\lambda\cdot Y^\nu\tau(N),
  \]
  where $\alpha\in F^\times$, $Y^\lambda\| N$, $\nu=\deg N$, then
  $N\in F^\times$.
  Here, $Y^\lambda\|N$ means that $Y^\lambda|N$ and $Y^{\lambda+1}\nmid N$.
\end{lemma}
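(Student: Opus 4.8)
The plan is to exploit the functional equation in the form $\tau(N)=N/(\alpha Y^{\lambda+\nu})$, combined with an explicit description of the iterates of $\tau$ on $F(Y)$. I introduce auxiliary polynomials $L_0:=Y+r$ and $L_{i+1}:=Y\,\tau(L_i)$, and check by induction that each $L_i=u_iY+w_i$ genuinely has degree $1$ (i.e. $u_i\ne0$), that its root $\rho_i:=-w_i/u_i$ obeys $\rho_{i+1}=-r/\tau_0(1-\rho_i)$, and that $\rho_i$ equals $-r$ times a quotient of polynomial expressions in $r,\tau_0r,\dots,\tau_0^{i-1}r$ each with constant term $1$. Because $v_P(\tau_0^jr)>0$ for all $j$, this forces $v_P(\rho_i)=v_P(r)>0$, hence $\rho_i\notin\{0,1\}$ --- and $\rho_i\ne1$ is exactly what keeps $u_{i+1}\ne0$. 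Along the way one also gets $\tau(L_i)=L_{i+1}/Y$ and $\tau^k(Y)=L_{k-1}/L_{k-2}$ for $k\ge2$.

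First I would show $\lambda=0$. Iterating $\tau(N)=N/(\alpha Y^{\lambda+\nu})$ with $\tau(L_i)=L_{i+1}/Y$ gives $\tau^k(N)=N\big/\big(\big(\prod_{i<k}\tau_0^i\alpha\big)L_{k-2}^{\lambda+\nu}\big)$ for $k\ge2$, while directly $\tau^k(N)=(\tau_0^kN)(L_{k-1}/L_{k-2})=P_k/L_{k-2}^{\nu}$ with $P_k:=\sum_j\tau_0^k(a_j)L_{k-1}^jL_{k-2}^{\nu-j}\in F[Y]$. Comparing gives $N=\big(\prod_{i<k}\tau_0^i\alpha\big)L_{k-2}^{\lambda}P_k$, so $L_{k-2}^{\lambda}\mid N$ in $F[Y]$ for all $k\ge2$. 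If $\lambda\ge1$ this makes every $\rho_i$ a nonzero root of $N$; since $N$ has finitely many roots and $\rho_{i+1}=\Psi(\rho_i)$ for the injective map $\Psi(\beta):=r/(\tau_0\beta-1)$, the sequence $(\rho_i)$ repeats, and injectivity of $\Psi$ pushes this back to $\Psi^m(-r)=-r$ for some $m\ge1$. Writing $\beta^{(i)}:=\Psi^i(-r)$ one has $\tau_0\beta^{(i-1)}=1+r/\beta^{(i)}$, i.e. $\eq(\tau_0\beta^{(i-1)};A;\beta^{(i)})$; applying powers of $\tau_0$ and composing around the cycle by Lemma~\ref{2017-04-25pA2} yields $\eq(\tau^m(-r);A_m;-r)$, so $y=-r\in F$ is a solution of $\eq(A,m)/\mathcal{F}$ algebraic over $F$, contradicting the hypothesis that no $\eq(A,i)/\mathcal{F}$ has a solution algebraic over $F$. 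Hence $\lambda=0$, so $N(0)\ne0$.

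Next I would show $\nu=0$. With $\lambda=0$ we have $N=\alpha Y^\nu\tau(N)$ and $N(0)\ne0$; over $\overline{F}$, with a fixed extension $\overline{\tau}_0$ of $\tau_0$, factor $N=a_\nu\prod_j(Y-\beta_j)$ with all $\beta_j\ne0$. Expanding $\tau(N)=(\tau_0N)\big(\tfrac{Y+r}{Y}\big)$ and clearing denominators gives $N=\alpha\,\tau_0(a_\nu)\prod_j\big[(1-\overline{\tau}_0\beta_j)Y+r\big]$; comparing degrees forces $\beta_j\ne1$ for every $j$, and comparing the multisets of roots gives $\{\beta_j\}_j=\{\Psi(\beta_j)\}_j$, again with $\Psi(\beta)=r/(\overline{\tau}_0\beta-1)$ injective. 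So $\Psi$ permutes the finitely many roots, hence fixes some root $\beta^*$ under $\Psi^m$ for some $m\ge1$; exactly as before, $\beta^*\in\overline{F}$ is then a solution of $\eq(A,m)/\mathcal{F}$ algebraic over $F$, a contradiction. Therefore $\nu=0$, so $N$ is a nonzero constant and $N\in F^\times$.

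The main obstacle is the bookkeeping that keeps the auxiliary polynomials $L_i$ of degree $1$ --- equivalently, that $\rho_i\ne1$ for all $i$ --- since otherwise $L_{k-2}^{\lambda}\mid N$ is vacuous. That step is exactly where the hypothesis $v_P(\tau_0^ir)>0$ is needed for \emph{all} $i$, not just $i=0$; the remainder is the iteration identity for $\tau^k(N)$, the composition lemma, and a pigeonhole on the root set.
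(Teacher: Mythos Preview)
Your proof is correct, and for the step $\lambda=0$ it takes a genuinely different route from the paper's. The paper factors $N$ over $\overline{F}$ from the outset, isolates the multiplicity $m_0$ of the root $1$, obtains $m_0=\lambda$ from the functional equation, and then rules out $1$ as a root via a dichotomy on the forward $\phi$-orbit of $1$ (where $\phi(\mu)=\overline{\tau}^{-1}(1+r/\mu)$): either the orbit never reaches $0$, in which case periodicity forces $r=0$, or it does reach $0$, in which case a valuation argument on $\overline{\tau}^i\phi^i(1)$ gives a contradiction. Your argument instead stays entirely in $F[Y]$: you build the linear polynomials $L_i$, use the valuation hypothesis to keep them genuinely linear (this is the analogue of the paper's $1\notin H_0$ step), iterate the functional equation to get $L_{k-2}^{\lambda}\mid N$ for all $k$, and then pigeonhole the $F$-rational roots $\rho_i$ of $N$ to produce $-r\in F$ as a solution of some $\eq(A,m)/\mathcal{F}$ via the composition lemma. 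This has the pleasant feature that the algebraic closure is not needed at all for the $\lambda=0$ step, and the contradiction is witnessed by an element of $F$ itself rather than of $\overline F$. For the step $\nu=0$, your argument and the paper's are essentially the same: factor over $\overline{F}$, observe that the injective map $\Psi(\beta)=r/(\overline{\tau}_0\beta-1)$ permutes the (nonzero) roots, extract a periodic point, and conclude via the composition lemma that it solves some $\eq(A,m)/\mathcal{F}$.

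Two small remarks on presentation. First, ``comparing degrees forces $\beta_j\ne1$'' is literally ``$\overline{\tau}_0\beta_j\ne1$''; the two are equivalent since $\overline{\tau}_0$ is injective and fixes $1$, but it is worth saying so. Second, when you compose the relations $\eq(\tau_0\beta^{(i-1)};A;\beta^{(i)})$ around the cycle, the clean way to line up the middle terms is to apply $\tau_0^{\,j}$ to the relation for $i=m-j$ and then compose from the inside out, yielding $\eq(\tau_0^{\,m}\beta^{(0)};A_m;\beta^{(0)})$ exactly as in the paper's final paragraph; your sketch is correct but a reader may appreciate that bookkeeping spelled out.
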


\begin{proof}
  Let $\ol{\mathcal{F}\gen{Y}}=(\ol{F(Y)},\ol{\tau})$ be an algebraic closure
  of $\mathcal{F}\gen{Y}$, and $\ol{\mathcal{F}}$ the algebraic closure of
  $\mathcal{F}$ in it.
  The polynomial $N$ is expressed as
  \begin{equation}\label{2017-04-25p2b}
    N=\beta(Y-1)^{m_0}\prod_{i=1}^l(Y-\mu_i)^{m_i},
  \end{equation}
  where $\beta\in F^\times$, $\mu_i\in\ol{F}$, $\mu_i\neq 1$,
  $\mu_i\neq \mu_j$ $(i\neq j)$, $m_i>0$ $(i\geq 1)$.
  Applying $\tau$, we obtain
  \[
    \tau N=\tau(\beta)\left(\frac{r}{Y}\right)^{m_0}
    \prod_{i=1}^l\left(1+\frac{r}{Y}-\ol{\tau}\mu_i\right)^{m_i},
  \]
  and so 
  \[
    N=\alpha Y^\lambda\cdot Y^\nu\tau(N)
    =\alpha Y^\lambda\tau(\beta)r^{m_0}\prod_{i=1}^l
    ((1-\ol{\tau}\mu_i)Y+r)^{m_i}.
  \]
  From the above two expressions for $N$, we find
  \begin{equation}\label{2017-04-25p2a}
    (Y-1)^{m_0}\prod_{i=1}^l(Y-\mu_i)^{m_i}
    =Y^\lambda\prod_{i=1}^l\left(Y-\frac{r}{\ol{\tau}\mu_i-1}\right)^{m_i}.
  \end{equation}
  Comparing degrees, we obtain $m_0=\lambda$.
  
  Let $H\subset\ol{F}$ be the set of all the roots of $N$.
  For any $\mu\in H\setminus\{0\}$, $1+r/\mu\in\ol{\tau}H$.
  Indeed, from the equation \eqref{2017-04-25p2a},
  $\mu=r/(\ol{\tau}\mu_i-1)$ for some $i$.
  This implies $\ol{\tau}\mu_i=1+r/\mu$, and so $1+r/\mu\in\ol{\tau}H$.
  Define the mapping $\phi\colon H\setminus\{0\}\to H$ by
  $\phi(\mu)=\ol{\tau}^{-1}(1+r/\mu)$, and the sets
  \[
    H_0=\{\mu\in H\;|\;\text{$\phi^i(\mu)=0$ for some $i\geq 0$}\},
    \quad H_\infty=H\setminus H_0.
  \]
  The restriction $\phi|_{H_\infty}$ is a bijection to $H_\infty$, for
  $\phi$ is injective and the number of elements of $H_\infty$ is finite.
  
  To show $H=\emptyset$, firstly, we assume $1\in H_\infty$.
  This implies $\phi^i(1)=1$ for some $i\geq 1$.
  Hence 
  \[
    1=\phi(\phi^{i-1}(1))=\ol{\tau}^{-1}\left(1+\frac{r}{\phi^{i-1}(1)}\right).
  \]
  Applying $\ol{\tau}$, we obtain
  \[
    1=1+\frac{r}{\phi^{i-1}(1)},
  \]
  and so $r=0$, a contradiction. Therefore we conclude $1\notin H_\infty$.
  Secondly, we assume $1\in H_0$, which means $\phi^n(1)=0$ for some $n\geq 1$.
  Let $F'=F(\ol{\tau}\phi(1),\dots,\ol{\tau}^{n-1}\phi^{n-1}(1))$.
  Since $F'$ is a subfield of $\ol{F}$, the extension $F'/F$ is algebraic.
  Hence $F'/K$ is an algebraic function field of one variable.
  There exists a place $Q$ of $F'/K$ such that $Q\cap F=P$.
  Let $m$ be the maximum of $0\leq i\leq n-1$ such that
  $v_Q(\ol{\tau}^i\phi^i(1))=0$.
  This is possible because $\ol{\tau}^0\phi^0(1)=1$.
  Since $\phi^m(1)\neq 0$, we find
  \[
    \phi^{m+1}(1)=\ol{\tau}^{-1}\left(1+\frac{r}{\phi^m(1)}\right),
  \]
  and so
  \[
    \ol{\tau}^{m+1}\phi^{m+1}(1)
    =\ol{\tau}^m\left(1+\frac{r}{\phi^m(1)}\right)
    =1+\frac{\tau^m r}{\ol{\tau}^m\phi^m(1)}\in F'.
  \]
  Note $v_Q(\tau^mr)>0$ and $v_Q(\ol{\tau}^m\phi^m(1))=0$.
  Then, from the above equation,
  we obtain $v_Q(\ol{\tau}^{m+1}\phi^{m+1}(1))=0$, 
  which contradicts the definition of $m$.
  Therefore we conclude $1\notin H_0$.
  
  The first result and the second imply $1\notin H$.
  Hence we find $\lambda=m_0=0$,
  and from the equation \eqref{2017-04-25p2a}
  \[
    \prod_{i=1}^l(Y-\mu_i)^{m_i}
    =\prod_{i=1}^l\left(Y-\frac{r}{\ol{\tau}\mu_i-1}\right)^{m_i}.
  \]
  This implies all the $\mu_i$ are non-zero, for $r$ is non-zero.
  Hence, from the equation \eqref{2017-04-25p2b}, it follows that
  \[
    H=\{\mu_1,\dots,\mu_l\}\not\ni 0.
  \]
  By definition, $H_0$ must be empty. 
  Therefore we find $H_\infty=H$, 
  and that $\phi=\phi|_{H_\infty}$ is a bijection of $H$ to $H$.
  Finally we assume $H\neq\emptyset$.
  Let $\eta=\mu_1$ for brevity.
  There exists some $j\geq 1$ such that $\phi^j(\eta)=\eta$.
  On the other hand, for any $i\geq 1$, 
  \[
    \phi^i(\eta)=\phi(\phi^{i-1}(\eta))=\ol{\tau}^{-1}\left(1+\frac{r}{\phi^{i-1}(\eta)}\right).
  \]
  Applying $\ol{\tau}^i$, we obtain
  \[
    \ol{\tau}^i\phi^i(\eta)=1+\frac{\tau^{i-1}r}{\ol{\tau}^{i-1}\phi^{i-1}(\eta)},
  \]
  which is equivalent to
  $\eq(\ol{\tau}^i\phi^i(\eta);\tau^{i-1}A;\ol{\tau}^{i-1}\phi^{i-1}(\eta))$.
  Hence by Lemma \ref{2017-04-25pA2} 
  \[
    \eq(\ol{\tau}^j\phi^j(\eta)=\ol{\tau}^j\eta;(\tau^{j-1}A)(\tau^{j-2}A)\cdots A;\eta)
  \]
  holds. It follows that $\eta\in\ol{\mathcal{F}}$ is a solution of
  $\eq(A,j)/\mathcal{F}$, a contradiction. Therefore we conclude $H=\emptyset$,
  which implies $N\in F^\times$.
\end{proof}

\begin{lemma}\label{2017-04-25p6a}
  If $R\in F(Y)$ satisfies 
  \begin{equation}\label{2017-04-25p6b}
    R-\alpha\left(-\frac{r}{Y^2}\right)^\gamma\tau(R)=0,\quad
    \alpha\in F^\times,\ \gamma\in\mathbb{Z}_{>0},
  \end{equation}
  then $R=0$.
\end{lemma}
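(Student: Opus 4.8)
The plan is to proceed by contradiction. Suppose $R\neq 0$ and write $R=N/M$ with $N,M\in F[Y]$ nonzero and coprime. I expect the functional equation to force $N$ and $M$ each into the shape treated in Lemma \ref{2017-04-25p1a}, hence into $F^{\times}$; and once $R\in F^{\times}$ the equation is visibly impossible, because its right-hand side would then have a pole of order $2\gamma>0$ at $Y=0$ while its left-hand side has none.

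The first step is to record how $\tau$ acts on a polynomial. Since $\tau(Y)=1+r/Y=(Y+r)/Y$, for a nonzero $P=\sum_{i=0}^{p}c_iY^i\in F[Y]$ one has $Y^{p}\tau(P)=\sum_{i=0}^{p}\tau_0(c_i)(Y+r)^iY^{p-i}\in F[Y]$, of degree $\le p$ and with constant term $\tau_0(c_p)r^{p}$; this is where $r\neq 0$ is used, since it makes $Y^{p}\tau(P)$ nonzero and not divisible by $Y$. Now clear denominators in the hypothesis to obtain $N\,\tau(M)\,Y^{2\gamma}=\alpha(-r)^{\gamma}M\,\tau(N)$, and with $\nu=\deg N$, $\mu=\deg M$, $\tilde N=Y^{\nu}\tau(N)$, $\tilde M=Y^{\mu}\tau(M)$ — both polynomials, not divisible by $Y$, of degrees $\le\nu$ and $\le\mu$ — this becomes the polynomial identity
\[ N\tilde M\,Y^{2\gamma+\nu}=\alpha(-r)^{\gamma}\,M\tilde N\,Y^{\mu}. \]
Factor $N=Y^{a}N_1$, $M=Y^{b}M_1$ with $Y\nmid N_1$, $Y\nmid M_1$; coprimality forces $\min(a,b)=0$. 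Since $Y$ divides neither $N_1\tilde M$ nor $M_1\tilde N$, comparing the exact power of $Y$ on the two sides forces the exponents to coincide and leaves $N_1\tilde M=\alpha(-r)^{\gamma}M_1\tilde N$.

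From this reduced identity together with $\gcd(N_1,M_1)=1$ one gets $M_1\mid\tilde M$ and $N_1\mid\tilde N$; writing $\tilde M=M_1P$, $\tilde N=N_1Q$ gives $P=\alpha(-r)^{\gamma}Q$. From $\deg\tilde M\le\mu$ and $\deg M_1=\mu-b$ one has $\deg P\le b$, and likewise $\deg Q\le a$, so $\deg P=\deg Q\le\min(a,b)=0$ and $P,Q\in F^{\times}$. Then $Y^{\mu}\tau(M)=\tilde M=P\,(M/Y^{b})$, that is $M=P^{-1}Y^{b}\cdot Y^{\deg M}\tau(M)$ with $Y^{b}\|M$, which is exactly the hypothesis of Lemma \ref{2017-04-25p1a}; hence $M\in F^{\times}$, and symmetrically $N\in F^{\times}$. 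Therefore $R\in F^{\times}$, so $\tau(R)\in F$ is nonzero ($\tau$ is injective), and the hypothesis now reads $R=\bigl(\alpha(-r)^{\gamma}\tau(R)\bigr)\,Y^{-2\gamma}$, whose right side has a pole of order $2\gamma\ge 2$ at $Y=0$ and whose left side has none. This contradiction gives $R=0$.

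I do not foresee a genuine obstacle; the argument is bookkeeping that channels everything into Lemma \ref{2017-04-25p1a}. The points needing care are tracking the powers of $Y$ so that the reduced identity and the bounds $\deg P\le b$, $\deg Q\le a$ come out correctly, and observing that the factor $(-r/Y^{2})^{\gamma}$ plays no structural role beyond $r\neq 0$ and producing the terminal pole-order contradiction.
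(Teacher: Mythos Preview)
Your proof is correct and follows essentially the same strategy as the paper: write $R$ as a ratio of coprime polynomials, use the functional equation to force each of them into the form $N=\beta Y^{\lambda}\cdot Y^{\deg N}\tau(N)$ required by Lemma~\ref{2017-04-25p1a}, conclude $R\in F^{\times}$, and read off a pole-order contradiction. The only cosmetic difference is in the middle divisibility step: the paper first proves $Y^{\mu}\tau(M)$ and $Y^{\nu}\tau(N)$ are coprime via a B\'ezout identity and then shows each divides the corresponding original polynomial up to a power of $Y$, whereas you factor out the $Y$-parts of $M$ and $N$ first and use $\gcd(N_1,M_1)=1$ together with the degree bounds $\deg P\le b$, $\deg Q\le a$ and $\min(a,b)=0$ to force $P,Q\in F^{\times}$.
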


\begin{proof}
  Assume $R\neq 0$. We shall derive a contradiction.
  Let $R=M/N$, where $M,N\in F[Y]\setminus\{0\}$ are relatively prime,
  and let $\mu=\deg M$ and $\nu=\deg N$.
  From the equation \eqref{2017-04-25p6b}, we obtain
  \[
    \frac{M}{N}=\alpha\left(-\frac{r}{Y^2}\right)^\gamma\frac{\tau(M)}{\tau(N)}
    =\alpha\left(-\frac{r}{Y^2}\right)^\gamma\frac{Y^\nu}{Y^\mu}
    \cdot\frac{Y^\mu\tau(M)}{Y^\nu\tau(N)},
  \]
  and so 
  \[
    MY^{2\gamma}Y^\mu\cdot Y^\nu\tau(N)=N\alpha(-r)^\gamma Y^\nu\cdot Y^\mu\tau(M).
  \]
  Here, $Y\nmid Y^\mu\tau(M)$ and $Y\nmid Y^\nu\tau(N)$ hold.
  Furthermore, $Y^\mu\tau(M)$ and $Y^\nu\tau(N)$ are relatively prime.
  Indeed, there exist $A,B\in F[Y]$ such that $AM+BN=1$.
  Applying $\tau$, we obtain $\tau(A)\tau(M)+\tau(B)\tau(N)=1$, and so
  \[
    Y^{m-\mu}\tau(A)\cdot Y^\mu\tau(M)+Y^{m-\nu}\tau(B)\cdot Y^\nu\tau(N)=Y^m,
  \]
  where $m=\max\{\mu,\nu\}+\max\{\deg A,\deg B\}$.
  Hence $\gcd(Y^\mu\tau(M),Y^\nu\tau(N))$ divides into $Y^m$,
  which implies $\gcd(Y^\mu\tau(M),Y^\nu\tau(N))=1$.
  Therefore we find 
  \[
    M\mid Y^\nu\cdot Y^\mu\tau(M), \quad Y^\mu\tau(M)\mid M,
  \]
  and
  \[
    N\mid Y^{2\gamma}Y^\mu\cdot Y^\nu\tau(N),\quad Y^\nu\tau(N)\mid N.
  \]
  
  Let $Y^{\lambda}\| M$. Since $M/Y^{\lambda}$ and $Y^\mu\tau(M)$
  divide into each other, there exists $\beta_1\in F^\times$ such that
  $M/Y^{\lambda}=\beta_1Y^\mu\tau(M)$ or
  \[
    M=\beta_1Y^{\lambda}\cdot Y^\mu\tau(M).
  \]
  By Lemma \ref{2017-04-25p1a}, we find $M\in F^\times$.
  Similarly, we also find $N\in F^\times$.
  Therefore we obtain $R\in F^\times$, which contradicts the equation \eqref{2017-04-25p6b}.
\end{proof}

\begin{lemma}\label{2017-04-25p9a}
  There is no $S\in F(Y)$ such that
  \begin{equation}\label{2017-04-25p9b}
    S+\frac{r}{Y^2}\tau(S)+\frac{\alpha}{Y}=0,\quad \alpha\in F^\times.
  \end{equation}
\end{lemma}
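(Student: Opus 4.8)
The plan is to argue by contradiction, tracking the poles of a solution through the dynamics of the map $\phi$ that appears in the proof of Lemma~\ref{2017-04-25p1a}. Suppose $S\in F(Y)$ satisfies \eqref{2017-04-25p9b}; after multiplying by $Y^2$ the relation reads $Y^2S+r\overline{\tau}(S)+\alpha Y=0$ in $\overline{F}(Y)$. Comparing degrees shows $S$ is not constant, so its set of poles $G$, regarded as a subset of $\overline{F}\cup\{\infty\}$, is finite and nonempty. Write $m(Y)=(Y+r)/Y$ and $\sigma=\overline{\tau}|_{\overline{F}}$, and let $v_\mu$ denote the valuation of $\overline{F}(Y)/\overline{F}$ attached to $\mu$. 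Since $\overline{\tau}$ acts on $\overline{F}(Y)$ by applying $\sigma$ to coefficients and substituting $Y\mapsto m(Y)$, and $m$ is fractional linear, one has $v_\mu(\overline{\tau}(S))=v_{\sigma^{-1}(m(\mu))}(S)$ when $m(\mu)\in\sigma(\overline{F}\cup\{\infty\})$ and $v_\mu(\overline{\tau}(S))=0$ otherwise. Substituting this into the relation and using that among three elements of $\mathbb{Z}\cup\{\infty\}$ with sum $0$ the minimum is attained at least twice, I would read off: for a finite nonzero place $\mu$, one has $\mu\in G$ if and only if $\phi(\mu)$ is defined and lies in $G$, in which case $v_\mu(S)=v_{\phi(\mu)}(S)$; and if moreover $\mu\neq 1$ then the preimage $\phi^{-1}(\mu)=r/(\sigma(\mu)-1)$ again lies in $G$.

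Next I would show $0\in G$. Otherwise every finite nonzero pole of $S$ has its full forward $\phi$-orbit inside the finite set of finite nonzero poles; by injectivity of $\phi$ this orbit is a cycle, so by the chaining of Lemma~\ref{2017-04-25pA2} some element of $\overline{F}$ is a solution of $\eq(A,p)/\mathcal{F}$ for some $p\ge 1$, against the hypothesis that no $\eq(A,i)/\mathcal{F}$ has a solution algebraic over $F$; and if $S$ had no finite pole at all it would lie in $F[Y]$ with a pole only at $\infty$, which is refuted by comparing the $v_\infty$-values of the three terms. So $0\in G$, and evaluating the relation at $Y=-r$ (where $m(-r)=0$, hence $\phi(-r)=0$) forces $-r\in G$; thus $G\cap\overline{F}$ has at least two elements.

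The crux is then a counting argument. By the above, $\phi$ restricts to an injection $(G\cap\overline{F})\setminus\{0\}\to G\cap\overline{F}$, so it misses exactly one point of $G\cap\overline{F}$. But every finite nonzero element of $G$ other than $1$ is $\phi$ of its preimage, which lies in $G$, and $0=\phi(-r)$ with $-r\in G$; hence the missed point can only be $1$, i.e.\ $1\in G$. Now follow the forward $\phi$-orbit of $1$: it stays in the finite set $G$ until it possibly reaches $0$. If it never reaches $0$ it is a cycle, and the chaining argument makes $1\in F$ a solution of $\eq(A,p)/\mathcal{F}$ for some $p\ge 1$, impossible. If it reaches $0$, then $\phi^n(1)=0$ for some $n\ge 1$; but this is excluded by exactly the valuation computation used in the proof of Lemma~\ref{2017-04-25p1a} to rule out $1\in H_0$ — passing to a place above $P$ in a suitable finite extension and using $v_P(\tau_0^i r)>0$ for all $i\ge 0$. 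Either way we reach a contradiction, so no such $S$ exists.

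The step I expect to be the main obstacle is setting up the pole‑transport dictionary carefully at the distinguished places $0$, $1$, $\infty$ and at $Y=-r$, where $m$ is degenerate and where the possible non‑surjectivity of $\sigma=\overline{\tau}|_{\overline F}$ can make $\overline{\tau}$ drop a pole; all of these special cases turn out to be forced by the equation itself, and once they are in place the counting that produces $1\in G$, together with the two obstructions already established in the course of Lemma~\ref{2017-04-25p1a} (periodic $\phi$-orbits give algebraic solutions of $\eq(A,\cdot)$, and $1$ cannot flow into $0$), finishes the argument quickly.
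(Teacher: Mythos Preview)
Your argument is correct, but it takes a substantially different route from the paper's. The paper does not track poles of $S$ at all: it writes $S=M/N$ with $M,N\in F[Y]$ coprime, clears denominators to obtain
\[
  MY^2Y^\mu\cdot Y^\nu\tau(N)=-N\bigl(\alpha YY^\mu\cdot Y^\nu\tau(N)+rY^\nu\cdot Y^\mu\tau(M)\bigr),
\]
and reads off $N\mid Y^{2+\mu}\cdot Y^\nu\tau(N)$ and $Y^\nu\tau(N)\mid N$. With $Y^\lambda\|N$ this gives $N=\beta Y^\lambda\cdot Y^\nu\tau(N)$, which is \emph{exactly} the hypothesis of Lemma~\ref{2017-04-25p1a}; hence $N\in F^\times$ and $S\in F[Y]$. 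The equation then rewrites as $S=-\alpha/Y-(r/Y^2)\tau(S)\in F[1/Y]$, so $S\in F[Y]\cap F[1/Y]=F$, which is immediately incompatible with \eqref{2017-04-25p9b}.

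What you do instead is unpack the dynamical content of Lemma~\ref{2017-04-25p1a} and apply it directly to the pole set of $S$: the iff $\mu\in G\Leftrightarrow\phi(\mu)\in G$ at finite nonzero places, the forced membership $0,-r\in G$, and the counting that singles out $1$ as the unique point missed by $\phi$ on $G\cap\overline F$. The two contradictions you then invoke---periodic $\phi$-orbits yield algebraic solutions of $\eq(A,\cdot)$, and the valuation argument ruling out $\phi^n(1)=0$---are precisely the ``$1\notin H_\infty$'' and ``$1\notin H_0$'' steps already established inside the proof of Lemma~\ref{2017-04-25p1a}. So your proof is essentially a second application of those internal steps, bypassing the packaged statement of the lemma. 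This is sound, and your acknowledged delicate points (the behaviour at $0$, $1$, $-r$, $\infty$, and possible non-surjectivity of $\sigma$) all work out as you indicate. The trade-off is length and cleanliness: the paper's route is a two-line reduction to Lemma~\ref{2017-04-25p1a} followed by a trivial $F[Y]\cap F[1/Y]$ observation, whereas your route reproves part of that lemma and requires the careful place-by-place bookkeeping you describe.
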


\begin{proof}
  Assume there exists such $S\in F(Y)$, which must be non-zero and can be
  written as
  \[
    S=\frac{M}{N},
  \]
  where $M,N\in F[Y]\setminus\{0\}$ are relatively prime.
  Let $\mu=\deg M$ and $\nu=\deg N$.
  We have
  \[
    \frac{M}{N}=-\frac{\alpha}{Y}-\frac{r}{Y^2}\cdot\frac{\tau(M)}{\tau(N)}
    =-\frac{\alpha}{Y}-\frac{r}{Y^2}\cdot\frac{Y^\nu}{Y^\mu}
    \cdot\frac{Y^\mu\tau(M)}{Y^\nu\tau(N)},
  \]
  from which we obtain
  \[
    MY^2Y^\mu\cdot Y^\nu\tau(N)
    =-N(\alpha YY^\mu\cdot Y^\nu\tau(N)+rY^\nu\cdot Y^\mu\tau(M)).
  \]
  The polynomials $Y^\mu\tau(M)$ and $Y^\nu\tau(N)$ do not have a factor $Y$.
  Moreover, they are relatively prime.
  Hence we find $N\mid Y^2Y^\mu\cdot Y^\nu\tau(N)$ and $Y^\nu\tau(N)\mid N$.
  Let $Y^\lambda\|N$.
  Since $N/Y^\lambda$ and $Y^\nu\tau(N)$ divide into each other,
  there exists $\beta\in F^\times$ such that $N/Y^\lambda=\beta Y^\nu\tau(N)$.
  By Lemma \ref{2017-04-25p1a}, we find $N\in F^\times$, and so $S\in F[Y]$.
  From the equation \eqref{2017-04-25p9b}, it follows that
  \[
    S=-\frac{\alpha}{Y}-\frac{r}{Y^2}\tau(S)\in F[1/Y].
  \]
  Since $S\in F[Y]\cap F[1/Y]$ implies $S\in F$, we find a contradiction
  from the above equation.
\end{proof}

\begin{lemma}\label{2017-04-25p11a}
  If there exists $S\in F(Y)$ such that
  \begin{equation}\label{2017-04-25p11b}
    S+\alpha Y^2\tau(S)+\beta Y=0,\quad \alpha,\beta\in F^\times,
  \end{equation}
  then there exists $a\in F$ such that
  \[
  \begin{aligned}
    &\tau^2(r^2\alpha)\tau(r\alpha)\alpha\tau^3(a)
    +(\tau(r)+1)\tau(r\alpha)\alpha\tau^2(a)\\
    &\quad -(\tau(r)+1)\alpha\tau(a)-a+\alpha\tau(\beta)=0.
  \end{aligned}
  \]
\end{lemma}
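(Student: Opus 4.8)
The plan is to show that every $S \in F(Y)$ satisfying \eqref{2017-04-25p11b} is in fact a polynomial in $Y$ of degree exactly $2$, and then to read off the asserted third-order recursion from its leading coefficient. \emph{Step 1 ($S \in F[Y]$).} Since $\beta \neq 0$ we have $S \neq 0$; write $S = M/N$ with $M,N \in F[Y]\setminus\{0\}$ relatively prime, $\mu = \deg M$, $\nu = \deg N$. Using $\tau(Y) = (Y+r)/Y$ and clearing denominators in \eqref{2017-04-25p11b} yields a polynomial identity, and exactly as in the proofs of Lemmas \ref{2017-04-25p6a} and \ref{2017-04-25p9a} the polynomials $Y^\mu\tau(M)$ and $Y^\nu\tau(N)$ are relatively prime and not divisible by $Y$; from this one reads off $N \mid Y^\mu\cdot Y^\nu\tau(N)$ and $Y^\nu\tau(N)\mid N$. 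Writing $Y^\lambda\|N$, this forces $N = \beta_0\,Y^\lambda\cdot Y^\nu\tau(N)$ for some $\beta_0 \in F^\times$, whence Lemma \ref{2017-04-25p1a} gives $N\in F^\times$, i.e. $S\in F[Y]$.

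\emph{Step 2 ($\deg S = 2$).} For a polynomial $S$ of degree $d$ the substitution $Y\mapsto(Y+r)/Y$ gives $\tau(S) = \widetilde S/Y^d$ with $\widetilde S\in F[Y]$ having nonzero constant term, equal to $r^d$ times the transform of the leading coefficient of $S$. Since $S + \alpha Y^2\tau(S) + \beta Y = 0$ while $S$ and $\beta Y$ are polynomials, $\alpha Y^2\tau(S) = \alpha\widetilde S/Y^{d-2}$ must be a polynomial, so $d\leq 2$; and comparing the coefficients of $Y^0$ and $Y^2$ in \eqref{2017-04-25p11b} in the cases $d=0$ and $d=1$ contradicts $\beta\neq 0$. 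Hence $d=2$.

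\emph{Step 3 (the recursion).} Write $S = c_0 + c_1 Y + c_2 Y^2$ with $c_0,c_1,c_2\in F$, $c_2\neq 0$, and put $a := c_2$. Expanding $\alpha Y^2\tau(S) = \alpha(\tau c_0 + \tau c_1 + \tau c_2)Y^2 + \alpha r(\tau c_1 + 2\tau c_2)Y + \alpha r^2\tau c_2$ and equating the coefficients of $Y^0, Y^1, Y^2$ in \eqref{2017-04-25p11b} to zero, the $Y^0$-relation gives $c_0 = -\alpha r^2\tau(a)$; substituting this into the $Y^2$-relation expresses $\tau(c_1)$, and hence $\tau^2(c_1)$, in terms of $a,\tau(a),\tau^2(a),\tau^3(a)$; finally, applying $\tau$ to the $Y^1$-relation and substituting these expressions together with $\tau^2(c_2)=\tau^2(a)$, then multiplying through by $\alpha$, produces precisely the displayed identity. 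So $a = c_2 \in F$ does the job.

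The only step requiring genuine input is Step 1, where Lemma \ref{2017-04-25p1a} — and through it the standing hypotheses on $r$, the place $P$, and the absence of solutions of $\eq(A,i)/\mathcal{F}$ algebraic over $F$ — is actually used; Steps 2 and 3 are elementary coefficient bookkeeping, the only pitfall being a sign slip in the $\tau$-shift at the end of Step 3.
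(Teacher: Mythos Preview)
Your proof is correct and follows essentially the same route as the paper: reduce to $S\in F[Y]$ via Lemma \ref{2017-04-25p1a}, bound the degree by $2$, write $S=aY^2+bY+c$, read off the three coefficient equations, and eliminate $b$ and $c$ to obtain the third-order relation in $a$. The only cosmetic difference is that the paper does not separately rule out $\deg S\le 1$---it simply allows $a=0$ a priori and derives the displayed identity for the $Y^2$-coefficient regardless, which is all the lemma asserts; your extra exclusion of $d\in\{0,1\}$ is harmless but unnecessary.
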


\begin{proof}
  Since $S$ is non-zero, it can be written as
  \[
    S=\frac{M}{N},
  \]
  where $M,N\in F[Y]\setminus\{0\}$ are relatively prime.
  Let $\mu=\deg M$ and $\nu=\deg N$.
  We have
  \[
    \frac{M}{N}=-\beta Y-\alpha Y^2\cdot\frac{\tau(M)}{\tau(N)}
    =-\beta Y-\alpha Y^2\cdot\frac{Y^\nu}{Y^\mu}\cdot\frac{Y^\mu\tau(M)}{Y^\nu\tau(N)},
  \]
  from which we obtain
  \[
    MY^\mu\cdot Y^\nu\tau(N)
    =-N(\beta YY^\mu\cdot Y^\nu\tau(N)+\alpha Y^2Y^\nu\cdot Y^\mu\tau(M)).
  \]
  The polynomials $Y^\mu\tau(M)$ and $Y^\nu\tau(N)$ do not have a factor $Y$.
  Moreover, they are relatively prime. Hence we find
  $N\mid Y^\mu\cdot Y^\nu\tau(N)$ and $Y^\nu\tau(N)\mid N$.
  Let $Y^\lambda\|N$. Since $N/Y^\lambda$ and $Y^\nu\tau(N)$ divide into
  each other, there exists $\gamma\in F^\times$ such that
  $N/Y^\lambda=\gamma Y^\nu\tau(N)$. By Lemma \ref{2017-04-25p1a}, we find
  $N\in F^\times$, and so $S\in F[Y]$.
  Looking at degrees in the equation \eqref{2017-04-25p11b},
  we see $\deg S=1$ or $2$.
  Hence $S$ can be written as
  \[
    S=aY^2+bY+c,\quad a,b,c\in F.
  \]
  From the equation \eqref{2017-04-25p11b}, we obtain
  \[
  \left\{
  \begin{aligned}
    &a+\alpha(\tau(a)+\tau(b)+\tau(c))=0,\\
    &b+\alpha(2r\tau(a)+r\tau(b))+\beta=0,\\
    &c+\alpha r^2\tau(a)=0.
  \end{aligned}
  \right.
  \]
  From the first equation and the second, it follows that
  \[
    \tau(b)=-\tau(c)-\tau(a)-\frac{a}{\alpha}
    =\tau(r^2\alpha)\tau^2(a)-\tau(a)-\frac{a}{\alpha}.
  \]
  Hence, from the second equation, we find
  \[
    \tau(b)+2\tau(r\alpha)\tau^2(a)+\tau(r\alpha)\tau^2(b)+\tau(\beta)=0,
  \]
  and so
  \[
  \begin{aligned}
    &\tau(r^2\alpha)\tau^2(a)-\tau(a)-\frac{a}{\alpha}+2\tau(r\alpha)\tau^2(a)\\
    &\quad +\tau(r\alpha)\left(\tau^2(r^2\alpha)\tau^3(a)-\tau^2(a)-\frac{\tau(a)}{\tau(\alpha)}\right)
    +\tau(\beta)=0.
  \end{aligned}
  \]
  Multiplying it by $\alpha$, we obtain
  \[
  \begin{aligned}
    &\tau^2(r^2\alpha)\tau(r\alpha)\alpha\tau^3(a)
    +(\tau(r)+1)\tau(r\alpha)\alpha\tau^2(a)\\
    &\quad -(\tau(r)+1)\alpha\tau(a)-a+\alpha\tau(\beta)=0,
  \end{aligned}
  \]
  the required.
\end{proof}

\section{Proof of Theorem}\label{theorem}

\begin{theorem}\label{2017-04-28p1a}
  Let $\mathcal{F}=(F,D_0,\tau_0)$ be a DTC field with
  $D_0\tau_0=s\tau_0D_0$ for a certain $s\in F^\times$,
  and $F/K$ an algebraic function filed of one variable.
  Let
  \[
    A=\begin{pmatrix}1 & r\\ 1&0\end{pmatrix}\in \mathrm{M}_2(F),\quad
    r\neq 0,\ D_0r\neq 0,
  \]
  and suppose there exists a place $P$ of $F/K$ such that
  $v_P(\tau_0^ir)>0$ for all $i\geq 0$.
  Additionally, suppose that for any $i\geq 1$, $\eq(A,i)/(F,\tau_0)$ has
  no solution algebraic over $F$.
  Let $\mathcal{U}=(U,D,\tau)$ be a DTC overfield of $\mathcal{F}$ with
  $D\tau=s\tau D$.
  If there exists a solution 
  $f\in\mathcal{U}$ of $\eq(A,1)/(F,\tau_0)$ which is differentially algebraic
  over $F$, then there exists $g\in F$ such that
  \[
  \begin{aligned}
    &\tau^2(sr)\tau(s)s\tau^3(g)+(\tau(r)+1)\tau(s)s\tau^2(g)\\
    &\quad -(\tau(r)+1)s\tau(g)-rg+s\tau(D(r)/r)=0.
  \end{aligned}
  \]
\end{theorem}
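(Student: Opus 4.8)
The plan is to reduce everything to Lemma \ref{2017-04-25p11a}: I will show that $Df$ must lie in $F(f)$, and then the cubic relation for $a$ produced by that lemma is, after the obvious bookkeeping, the assertion of the theorem. Since $\eq(A,1)/(F,\tau_0)$ has no solution algebraic over $F$, the given solution $f$ is transcendental over $F$, so $\mathcal F\gen f=(F(f),\tau)$ with $\tau f=1+r/f$; thus the hypotheses of Section \ref{lemmas} hold with $Y=f$. Differentiating $f\,\tau f=f+r$ and using $D\tau=s\tau D$ gives $Df\cdot\tau f+sf\,\tau(Df)=Df+Dr$, which after substituting $\tau f=1+r/f$ becomes
\[
  Df+\frac sr f^2\,\tau(Df)-\frac{Dr}{r}f=0 .
\]
Once $Df\in F(f)$ is known, Lemma \ref{2017-04-25p11a} applies with $S=Df$, $\alpha=s/r\in F^\times$ and $\beta=-Dr/r\in F^\times$ (this is where $Dr\neq0$ is used), producing $a\in F$ that satisfies the cubic relation of that lemma; substituting $\alpha=s/r$, multiplying through by $r$ and setting $g=-a$ turns it into exactly the displayed third-order equation of the theorem. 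So the whole content is the claim: \emph{if $f$ is differentially algebraic over $F$, then $Df\in F(f)$.}

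Iterating differentiation of the relation above, one checks by induction on $k\ge1$ that
\[
  \tau(D^kf)=-\frac{r}{s^kf^2}\,D^kf+\rho_k,\qquad \rho_k\in F(f,Df,\dots,D^{k-1}f),
\]
with $\rho_1=Dr/(sf)$ and $\rho_{k+1}$ built from $D\rho_k$ and $D\!\left(-r/(s^kf^2)\right)$. Consider first the case in which $Df$ is algebraic over $F(f)$, of degree $d$ say. Passing to $\ol{\mathcal F\gen f}$ as in the lemma proofs and choosing the extension $\ol\tau$ so that $\ol\tau(Df)$ equals the element $-\tfrac r{sf^2}Df+\rho_1$ above --- legitimate because in $\mathcal U$ one has $\tau(Df)=-\tfrac r{sf^2}Df+\rho_1$, so this element is a root of the $\tau$-transform of the minimal polynomial of $Df$ --- the conjugates of $Df$ over $F(f)$ get sent to the corresponding linear images, so the trace $\sigma:=\trace(Df)\in F(f)$ satisfies $\sigma+\tfrac sr f^2\tau\sigma-d\tfrac{Dr}{r}f=0$. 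Hence $w:=Df-\sigma/d$ satisfies the homogeneous equation $w+\tfrac sr f^2\tau w=0$, and $w$ is algebraic over $F(f)$; if $w\neq0$ then, with $d'=[F(f)(w):F(f)]$, the norm $\pi\in F(f)$ (a product of nonzero conjugates) satisfies $\tau\pi=(-r/(sf^2))^{d'}\pi$, so $1/\pi\in F(f)$ is a nonzero solution of the equation of Lemma \ref{2017-04-25p6a} with $\gamma=d'$ and $\alpha=1/s^{d'}$ --- impossible. Therefore $w=0$, i.e.\ $Df=\sigma/d\in F(f)$, and Lemma \ref{2017-04-25p11a} finishes this case.

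There remains the case in which $Df$ is transcendental over $F(f)$; equivalently, the minimal order $n$ of an algebraic differential equation satisfied by $f$ over $F$ is at least $2$ (it is finite by differential algebraicity, and $\ge1$ since $f$ is transcendental over $F$). This is the main obstacle, and I would treat it by a Hölder-type argument. Take $P\in F[x_0,\dots,x_n]$ irreducible with $P(f,Df,\dots,D^nf)=0$, $n$ minimal and then $\deg_{x_n}P=e$ minimal; then $f,Df,\dots,D^{n-1}f$ are algebraically independent over $F$, and $\hat P(T):=P(f,Df,\dots,D^{n-1}f,T)$ is, up to a scalar, the minimal polynomial of $D^nf$ over $E:=F(f,Df,\dots,D^{n-1}f)$. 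Applying $\tau$ to $\hat P(D^nf)=0$ and substituting $\tau(D^nf)=-\tfrac r{s^nf^2}D^nf+\rho_n$ shows that $D^nf$ is also a root of the degree-$e$ polynomial $T\mapsto (\tau\hat P)\!\left(-\tfrac r{s^nf^2}T+\rho_n\right)$ over $E$, so this polynomial is a scalar multiple of $\hat P$ over $E$. Comparing the coefficients of $T^e$ and of $T^{e-1}$ and eliminating the scalar yields, for $\xi$ the ratio of the subleading and leading coefficients of $\hat P$, the relation $\tau\xi=-\tfrac r{s^nf^2}\xi-e\rho_n$ (for $e=1$ this is vacuous, and one argues separately from the functional equation forced on the rational expression of $D^nf$ in terms of $f,\dots,D^{n-1}f$). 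The aim is to push this down to a relation inside $F(f)$ and contradict Lemma \ref{2017-04-25p9a} (or \ref{2017-04-25p6a}), thereby forcing $n\le1$ and reducing to the previous case.

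The hard part is precisely that last descent: controlling the dependence of $\xi$ and $\rho_n$ on the lower derivatives tightly enough to extract an equation of the form $S+\tfrac r{Y^2}\tau(S)+\tfrac\alpha Y=0$ over $F(f)$ with $\alpha\in F^\times$ --- equivalently, showing that the Hölder comparison is inconsistent unless $\rho_1=Dr/(sf)$ vanishes, which is excluded by $Dr\neq0$. The remaining points that need care are the compatible choice of $\ol\tau$ in the trace and norm computations, and checking that the polynomial $(\tau\hat P)\!\left(-\tfrac r{s^nf^2}T+\rho_n\right)$ is indeed nonzero of exact degree $e$, so that the proportionality with $\hat P$ is legitimate.
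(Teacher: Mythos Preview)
Your first case—$Df$ algebraic over $F(f)$—is fine and is, after unwinding, the paper's argument for $n=1$: your trace $\sigma$ is (minus) the subleading coefficient of the minimal polynomial, and the functional equation you derive for it is exactly the relation the paper obtains for $R_e$ when $d=(d_1)$; Lemma \ref{2017-04-25p11a} then finishes. (Your extra step showing $Df=\sigma/d$ via the norm and Lemma \ref{2017-04-25p6a} is correct but unnecessary: applying Lemma \ref{2017-04-25p11a} directly to $S=\sigma$ with $\beta=-d\,Dr/r$ and dividing the resulting $a$ by $d$ already gives $g$.) Also, the phrase ``choosing the extension $\ol\tau$'' is misleading: you never need to extend $\tau$ to conjugates; the relations for $\sigma$ and $\pi$ follow from the proportionality $p^{\tau}\!\big(-\tfrac{r}{sf^2}T+\rho_1\big)=c\,p(T)$ of polynomials over $F(f)$.

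The genuine gap is your second case, $n\ge 2$, which you yourself flag as ``the hard part''. Your proposed descent does not close: the subleading coefficient $\xi$ and the inhomogeneity $\rho_n$ live in $E=F(f,Df,\dots,D^{n-1}f)$, not in $F(f)$, so Lemmas \ref{2017-04-25p6a} and \ref{2017-04-25p9a}, which are statements about elements of $F(Y)$, cannot be applied to them. There is no evident mechanism in your outline to push the relation for $\xi$ down to $F(f)$ without already controlling the lower derivatives, and the parenthetical ``for $e=1$ one argues separately'' is exactly the case where $D^nf\in E$, which is the generic situation and not a side case.

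The paper avoids any such descent. It treats the minimal relation as an irreducible $G\in F(f)[Y_1,\dots,Y_n]$, compares $G$ with its transform $H$ obtained from the substitutions $Y_m\mapsto s^{-m}(Z_m-(r/f^2)Y_m)$, and gets $H=cG$. Ordering monomials lexicographically from $Y_n$ downward, the crux is to show that the leading monomial of $G$ involves $Y_1$. This uses the precise shape of the recursions: for $m\ge 2$ the cross term $\tfrac{2mr}{f^3}f'f^{(m-1)}$ in $s^m\tau f^{(m)}$ (resp.\ $\tfrac{2r}{f^3}(f')^2$ for $m=2$) forces, if the leading monomial avoided $Y_1$, a coefficient $R_e\in F(f)$ satisfying $R_e+\tfrac{r}{f^2}\tau(R_e)+\tfrac{\text{const}}{f}=0$, contradicting Lemma \ref{2017-04-25p9a}; the monomials between $d$ and $e$ are killed along the way by Lemma \ref{2017-04-25p6a}. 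Once the leading monomial does involve $Y_1$, the immediately subleading coefficient lies in $F(f)$ and satisfies the hypothesis of Lemma \ref{2017-04-25p11a} with $\alpha=s/r$, $\beta=d_1\,Dr/r$; dividing the resulting $a$ by $d_1$ yields $g$. Your H\"older-style sketch does not engage this monomial combinatorics, and I do not see how to complete the argument for $n\ge 2$ without it.
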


\begin{proof}
  We use the ordinary notation $f'$ instead of $Df$ for brevity.
  From $\tau(f)f=f+r$ and $f\neq 0$, we have
  \[
    \tau f=1+\frac{r}{f}.
  \]
  By differentiation, we obtain
  \begin{equation}\label{2017-04-28p2a}
    s\tau f'=D\tau f=\frac{r'}{f}-\frac{r}{f^2}f'.
  \end{equation}
  By differentiation again, we obtain
  \[
    s'\tau f'+sD\tau f'=\frac{r''f-r'f'}{f^2}-\frac{(r'f'+rf'')f^2-rf'\cdot 2ff'}{f^4},
  \]
  and so
  \begin{equation}\label{2017-04-28p2b}
    s^2\tau f''=-s'\tau f'+\frac{r''}{f}-\frac{2r'}{f^2}f'+\frac{2r}{f^3}f'^2
    -\frac{r}{f^2}f''.
  \end{equation}
  By repeated differentiation, it is seen that for all $m\geq 3$,
  \begin{equation}\label{2017-04-28p3a}
  \begin{aligned}
    s^m\tau f^{(m)}&=(\text{a polynomial in $f',\dots,f^{(m-2)}$ over $F(f)$})\\
    &\quad +(\text{an element of $F(f)$})f^{(m-1)}
    +\frac{2mr}{f^3}f'f^{(m-1)}-\frac{r}{f^2}f^{(m)}.
  \end{aligned}
  \end{equation}
  Recall $\trdeg F(f,f',f'',\dots)/F<\infty$ and that $f$ is transcendental
  over $F$.
  Let $n\geq 1$ be the minimum number such that
  $f',f'',\dots,f^{(n)}$ are algebraically dependent over $F(f)$.
  If $n\geq 2$, $f',f'',\dots, f^{(n-1)}$ are algebraically independent
  over $F(f)$.
  Define the polynomials $Z_1,\dots,Z_n\in F(f)[Y_1,\dots,Y_{n-1}]$ by
  \[
    s^m\tau f^{(m)}+\frac{r}{f^2}f^{(m)}=Z_m(f',\dots,f^{(n-1)}).
  \]
  We have $Z_m\in F(f)[Y_1,\dots,Y_{m-1}]$.
  Let $G\in F(f)[Y_1,\dots,Y_n]\setminus F(f)$ be an irreducible polynomial
  such that $G(f',\dots,f^{(n)})=0$, and $H$ a polynomial defined by
  \[
    H=G^\tau(s^{-1}(Z_1-(r/f^2)Y_1),\dots,s^{-n}(Z_n-(r/f^2)Y_n))
    \in F(f)[Y_1,\dots,Y_n],
  \]
  where $G^\tau$ is the polynomial whose coefficients are the first transforms
  of the corresponding coefficients of $G$.
  It follows that
  \[
  \begin{aligned}
    H(f',\dots,f^{(n)})
    &=G^\tau(s^{-1}(Z_1(f',\dots,f^{(n-1)})-(r/f^2)f'),\dots,\\
    &\hspace{4em} s^{-n}(Z_n(f',\dots,f^{(n-1)})-(r/f^2)f^{(n)}))\\
    &=G^\tau(\tau f',\dots,\tau f^{(n)})\\
    &=\tau(G(f',\dots,f^{(n)}))\\
    &=0.
  \end{aligned}
  \]
  Hence we find $G\mid H$ (cf. the book \cite{ZS1958}, Ch. II, \S 13, Lemma 2).

  The polynomial $G$ is expressed as
  \[
    G=\sum_{i=(i_1,\dots,i_n)}R_iY_1^{i_1}\cdots Y_n^{i_n},
    \quad R_i\in F(f).
  \]
  Let $d$ be the maximum of $i$ with $R_i\neq 0$ in the sense of
  \[
    (i_1,\dots,i_n)<(j_1,\dots,j_n)
    \Longleftrightarrow i_n=j_n,\dots,i_{m+1}=j_{m+1},\ i_m<j_m.
  \]
  We may suppose $R_d=1$.
  If we let $\|i\|$ denote $i_1+i_2+\dots+i_n$ and $[i]$ denote $i_1+2i_2+\dots+ni_n$,
  $H$ can be written as 
  \[
  \begin{aligned}
    H&=\sum_i\tau(R_i)\left(s^{-1}\left(Z_1-\frac{r}{f^2}Y_1\right)\right)^{i_1}
    \cdots\left(s^{-n}\left(Z_n-\frac{r}{f^2}Y_n\right)\right)^{i_n}\\
    &=\sum_i\tau(R_i)s^{-[i]}\left(\left(-\frac{r}{f^2}\right)^{\|i\|}Y_1^{i_1}\cdots Y_n^{i_n}+\cdots\right)\\
    &=s^{-[d]}\left(-\frac{r}{f^2}\right)^{\|d\|}Y_1^{d_1}\cdots Y_n^{d_n}
    +\cdots.
  \end{aligned}
  \]
  Hence we find
  \[
    H=s^{-[d]}\left(-\frac{r}{f^2}\right)^{\|d\|}G.
  \]
  Comparing the coefficients of $Y_1^{i_1}\cdots Y_n^{i_n}$, we obtain
  \begin{equation}\label{2017-04-28p5a}
    s^{-[d]}\left(-\frac{r}{f^2}\right)^{\|d\|}R_i
    =\tau(R_i)s^{-[i]}\left(-\frac{r}{f^2}\right)^{\|i\|}
    +\sum_{j>i}\tau(R_j)s^{-[j]}S_{ji},
  \end{equation}
  where $S_{ji}\in F(f)$ is the coefficient of $Y_1^{i_1}\cdots Y_n^{i_n}$
  of
  \[
    \left(Z_1-\frac{r}{f^2}Y_1\right)^{j_1}\cdots
    \left(Z_n-\frac{r}{f^2}Y_n\right)^{j_n}.
  \]
  Let $m=\min\{k\;|\;d_k\neq 0\}$.
  We shall show $m=1$.
  
  Firstly, assume $m\geq 3$.
  We have $d=(0,\dots,0,d_m,\dots,d_n)$ and
  \[
  \begin{aligned}
    &\left(Z_m-\frac{r}{f^2}Y_m\right)^{d_m}\cdots
    \left(Z_n-\frac{r}{f^2}Y_n\right)^{d_n}\\
    &=\left(\dots+\frac{2mr}{f^3}Y_1Y_{m-1}-\frac{r}{f^2}Y_m\right)^{d_m}\cdots
    \left(\dots+\frac{2nr}{f^3}Y_1Y_{n-1}-\frac{r}{f^2}Y_n\right)^{d_n}\\
    &=\left(-\frac{r}{f^2}\right)^{\|d\|}Y_m^{d_m}\cdots Y_n^{d_n}\\
    &\quad +d_m\frac{2mr}{f^3}Y_1Y_{m-1}\left(-\frac{r}{f^2}\right)^{\|d\|-1}Y_m^{d_m-1}Y_{m+1}^{d_{m+1}}\cdots Y_n^{d_n}
    +\cdots.
  \end{aligned}
  \]
  Let $e=(1,0,\dots,0,1,d_m-1,d_{m+1},\dots,d_n)$.
  For any $e<i<d$, $R_i=0$ holds. 
  Indeed, let $i$ be the maximum of $e<i<d$ with $R_i\neq 0$ if they exist.
  From the equation \eqref{2017-04-28p5a}, we have
  \[
    s^{-[d]}\left(-\frac{r}{f^2}\right)^{\|d\|}R_i
    =\tau(R_i)s^{-[i]}\left(-\frac{r}{f^2}\right)^{\|i\|}+s^{-[d]}S_{di}.
  \]
  Since we find $S_{di}=0$ from the above equation, this implies
  \[
    R_i-s^{[d]-[i]}\left(-\frac{r}{f^2}\right)^{\|i\|-\|d\|}\tau(R_i)=0.
  \]
  Here, the form of $i$ is as follows:
  \[
    i=(\ast,\dots,\ast,\ast\geq 1,d_m-1,d_{m+1},\dots,d_n),\quad 
    \|(\ast,\dots,\ast,\ast\geq 1)\|\geq 2.
  \]
  Hence we obtain $\|i\|-\|d\|\geq 1$, and so
  $R_i=0$ by Lemma \ref{2017-04-25p6a}, a contradiction.
  We found $R_i=0$ for any $e<i<d$.
  From the equation \eqref{2017-04-28p5a}, we have
  \[
    s^{-[d]}\left(-\frac{r}{f^2}\right)^{\|d\|}R_e
    =\tau(R_e)s^{-[e]}\left(-\frac{r}{f^2}\right)^{\|e\|}+s^{-[d]}S_{de},
  \]
  where
  \[
    S_{de}=d_m\frac{2mr}{f^3}\left(-\frac{r}{f^2}\right)^{\|d\|-1}.
  \]
  By $\|e\|-\|d\|=1$ and
  \[
    [d]-[e]=md_m-(1+(m-1)+m(d_m-1))=0,
  \]
  it follows that
  \[
    R_e+\frac{r}{f^2}\tau(R_e)+\frac{2md_m}{f}=0,
  \]
  which contradicts Lemma \ref{2017-04-25p9a}.
  Therefore we conclude $m\leq 2$.
  
  Secondly, assume $m=2$.
  We have $d=(0,d_2,\dots,d_n)$ and
  \[
  \begin{aligned}
    &\left(Z_2-\frac{r}{f^2}Y_2\right)^{d_2}\cdots
    \left(Z_n-\frac{r}{f^2}Y_n\right)^{d_n}\\
    &=\left(\dots+\frac{2r}{f^3}Y_1^2-\frac{r}{f^2}Y_2\right)^{d_2}
    \left(\dots+\frac{6r}{f^3}Y_1Y_{2}-\frac{r}{f^2}Y_3\right)^{d_3}\\
    &\quad\cdots
    \left(\dots+\frac{2nr}{f^3}Y_1Y_{n-1}-\frac{r}{f^2}Y_n\right)^{d_n}\\
    &=\left(-\frac{r}{f^2}\right)^{\|d\|}Y_2^{d_2}\cdots Y_n^{d_n}\\
    &\quad +d_2\frac{2r}{f^3}Y_1^2\left(-\frac{r}{f^2}\right)^{\|d\|-1}Y_2^{d_2-1}Y_{3}^{d_{3}}\cdots Y_n^{d_n}
    +\cdots.
  \end{aligned}
  \]
  Let $e=(2,d_2-1,d_3,\dots,d_n)$.
  For any $e<i<d$, $R_i=0$ holds. 
  Indeed, let $i$ be the maximum of $e<i<d$ with $R_i\neq 0$ if they exist.
  From the equation \eqref{2017-04-28p5a}, we have
  \[
  \begin{aligned}
    s^{-[d]}\left(-\frac{r}{f^2}\right)^{\|d\|}R_i
    &=\tau(R_i)s^{-[i]}\left(-\frac{r}{f^2}\right)^{\|i\|}+s^{-[d]}S_{di}\\
    &=\tau(R_i)s^{-[i]}\left(-\frac{r}{f^2}\right)^{\|i\|},
  \end{aligned}
  \]
  and so
  \[
    R_i-s^{[d]-[i]}\left(-\frac{r}{f^2}\right)^{\|i\|-\|d\|}\tau(R_i)=0.
  \]
  Here, the form of $i$ is as follows:
  \[
    i=(\ast\geq 3,d_2-1,d_{3},\dots,d_n).
  \]
  Hence we obtain $\|i\|-\|d\|\geq 2$, and so
  $R_i=0$ by Lemma \ref{2017-04-25p6a}, a contradiction.
  We found $R_i=0$ for any $e<i<d$.
  From the equation \eqref{2017-04-28p5a}, we have
  \[
    s^{-[d]}\left(-\frac{r}{f^2}\right)^{\|d\|}R_e
    =\tau(R_e)s^{-[e]}\left(-\frac{r}{f^2}\right)^{\|e\|}
    +s^{-[d]}d_2\frac{2r}{f^3}\left(-\frac{r}{f^2}\right)^{\|d\|-1}.
  \]
  By $\|e\|-\|d\|=1$ and
  $[d]-[e]=2d_2-(2+2(d_2-1))=0$,
  it follows that
  \[
    R_e+\frac{r}{f^2}\tau(R_e)+\frac{2d_2}{f}=0,
  \]
  which contradicts Lemma \ref{2017-04-25p9a}.
  Therefore we conclude $m=1$.
  
  We have $d=(d_1,d_2,\dots,d_n)$, $d_1\neq 0$, and
  \[
  \begin{aligned}
    &\left(Z_1-\frac{r}{f^2}Y_1\right)^{d_1}\cdots
    \left(Z_n-\frac{r}{f^2}Y_n\right)^{d_n}\\
    &=\left(\frac{r'}{f}-\frac{r}{f^2}Y_1\right)^{d_1}
    \left(\dots+\frac{2r}{f^3}Y_1^2-\frac{r}{f^2}Y_2\right)^{d_2}
    \left(\dots+\frac{6r}{f^3}Y_1Y_{2}-\frac{r}{f^2}Y_3\right)^{d_3}\\
    &\quad\cdots
    \left(\dots+\frac{2nr}{f^3}Y_1Y_{n-1}-\frac{r}{f^2}Y_n\right)^{d_n}\\
    &=\left(-\frac{r}{f^2}\right)^{\|d\|}Y_1^{d_1}\cdots Y_n^{d_n}
    +d_1\frac{r'}{f}\left(-\frac{r}{f^2}\right)^{\|d\|-1}Y_1^{d_1-1}Y_{2}^{d_{2}}\cdots Y_n^{d_n}
    +\cdots.
  \end{aligned}
  \]
  Let $e=(d_1-1,d_2,\dots,d_n)$.
  From the equation \eqref{2017-04-28p5a}, we have
  \[
    s^{-[d]}\left(-\frac{r}{f^2}\right)^{\|d\|}R_e
    =\tau(R_e)s^{-[e]}\left(-\frac{r}{f^2}\right)^{\|e\|}
    +s^{-[d]}d_1\frac{r'}{f}\left(-\frac{r}{f^2}\right)^{\|d\|-1}.
  \]
  By $\|e\|-\|d\|=-1$ and
  $[d]-[e]=d_1-(d_1-1)=1$,
  it follows that
  \[
    R_e+\frac{s}{r}f^2\tau(R_e)+\frac{d_1r'}{r}f=0,\quad r'\neq 0.
  \]
  By Lemma \ref{2017-04-25p11a}, there exists $a\in F$ such that
  \[
  \begin{aligned}
    &\tau^2(sr)\tau(s)\frac{s}{r}\tau^3(a)+(\tau(r)+1)\tau(s)\frac{s}{r}\tau^2(a)\\
    &\quad -(\tau(r)+1)\frac{s}{r}\tau(a)-a+\frac{s}{r}\tau\left(\frac{d_1r'}{r}\right)
    =0.
  \end{aligned}
  \]
  Letting $g=a/d_1\in F$ and multiplying by $r/d_1$, we obtain
  \[
  \begin{aligned}
    &\tau^2(sr)\tau(s)s\tau^3(g)+(\tau(r)+1)\tau(s)s\tau^2(g)\\
    &\quad -(\tau(r)+1)s\tau(g)-rg+s\tau(r'/r)=0,
  \end{aligned}
  \]
  the required.
\end{proof}

\begin{proposition}\label{2017-05-25p1a}
  Let $\mathcal{F}=(F,D_0,\tau_0)$ be a DTC field with 
  $D_0\tau_0=s\tau_0D_0$ for a certain $s\in F^\times$.
  Let 
  \[
    A=\begin{pmatrix}1&r\\1&0\end{pmatrix}\in \mathrm{M}_2(F),\quad r\neq 0.
  \]
  Suppose there exists $g\in F$ such that
  \[
  \begin{aligned}
    &\tau_0^2(sr)\tau_0(s)s\tau_0^3(g)+(\tau_0(r)+1)\tau_0(s)s\tau_0^2(g)\\
    &\quad -(\tau_0(r)+1)s\tau_0(g)-rg+s\tau_0(D_0(r)/r)=0,
  \end{aligned}
  \]
  and let
  \[
    R=gY^2-(\tau_0(sr)s\tau_0^2(g)+s\tau_0(g)-rg+D_0(r)/r)Y-sr\tau_0(g)\in F[Y].
  \]
  Then there exist a DTC overfield $\mathcal{U}=(U,D,\tau)$ of $\mathcal{F}$
  with $D\tau=s\tau D$ and a solution $f\in \mathcal{U}$ of $\eq(A,1)/(F,\tau_0)$
  which satisfies the differential Riccati equation,
  \[
    Df+R(f)=0.
  \]
\end{proposition}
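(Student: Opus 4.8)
The plan is to realize $\mathcal{U}$ concretely as a rational function field over $F$, prescribing the transforming operator and the derivation by the two equations that $f$ must satisfy, and then to check the compatibility relation $D\tau=s\tau D$ by a direct computation which is where the hypothesis on $g$ enters.

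First I would take $f$ transcendental over $F$ and set $U=F(f)$. Since $r\in F^\times$, the rational function $1+r/f=(f+r)/f$ of $f$ is non-constant, hence transcendental over $\tau_0(F)$; therefore the assignment $f\mapsto (f+r)/f$ together with $\tau_0$ on $F$ determines a well-defined injective endomorphism $\tau$ of $U$ with $\tau|_F=\tau_0$, so that $(U,\tau)$ is a difference overfield of $(F,\tau_0)$ and, by $\tau(f)\,f=f+r$, the element $f$ is a solution of $\eq(A,1)/(F,\tau_0)$. Because $f$ is transcendental over $F$, there is a unique derivation $D$ of $U=F(f)$ extending $D_0$ with $D(f)=-R(f)$; this makes $\mathcal{U}=(U,D,\tau)$ a DTC overfield of $\mathcal{F}$ satisfying $Df+R(f)=0$ by construction. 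The whole proposition then reduces to verifying that $D\tau=s\tau D$ holds on $U$.

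For this I would look at $\delta:=D\tau-s\tau D\colon U\to U$. It is additive, and since $\tau$ is a ring homomorphism both $D\tau$ and $s\tau D$ satisfy the twisted Leibniz rule $\partial(ab)=\partial(a)\,\tau(b)+\tau(a)\,\partial(b)$, hence so does $\delta$. It follows that $\{x\in U:\delta x=0\}$ is a subring of $U$ containing $1$, and, applying the twisted Leibniz rule to $x\cdot x^{-1}=1$ together with the injectivity of $\tau$, that it is a subfield; it contains $F$ precisely because $D_0\tau_0=s\tau_0 D_0$. So it is enough to prove $\delta f=0$, i.e. $D(\tau f)=s\,\tau(Df)$. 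Writing $f'=Df=-R(f)$, one has on the one side $D(\tau f)=D(1+r/f)=\bigl((D_0 r)f-rf'\bigr)/f^2$, and on the other side $s\,\tau(Df)=-s\,\tau_0(R)\bigl((f+r)/f\bigr)$, where $\tau_0(R)$ denotes $R$ with $\tau_0$ applied to each of its coefficients. After clearing the common denominator $f^2$, both sides become polynomials in $f$ of degree at most $2$, and I would compare the coefficients of $f^0$, $f^1$ and $f^2$: the constant coefficients agree identically, while the coefficients of $f$ and of $f^2$ each reduce, after substituting $f'=-R(f)$ and expanding, to exactly the third-order linear difference equation assumed for $g$ (using $r\neq 0$ to cancel a common factor $r$ in the $f^1$-comparison). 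This yields $\delta f=0$, hence $D\tau=s\tau D$, completing the proof.

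The routine but somewhat lengthy heart of the argument is the coefficient comparison in the last step; the one conceptual ingredient is the observation that $D\tau-s\tau D$ is a $\tau$-twisted derivation, which cuts the verification of $D\tau=s\tau D$ down to its effect on $F$ (which is the hypothesis) and on the single transcendental generator $f$.
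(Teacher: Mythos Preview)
Your proposal is correct and follows essentially the same approach as the paper: construct $U=F(f)$ with $\tau f=1+r/f$ and $Df=-R(f)$, then verify $D\tau=s\tau D$ by checking it on the generator $f$, which boils down to a coefficient comparison in $f$ that uses the assumed third-order equation for $g$. The only cosmetic differences are that the paper packages the computation as the identity $R(f)+\tfrac{s}{r}f^2\,\tau(R)(\tau f)+\tfrac{D_0 r}{r}f=0$ before substituting $R(f)=-Df$, and does not spell out (as you do with the $\tau$-twisted derivation $\delta$) why checking the relation on $f$ and on $F$ suffices.
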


\begin{proof}
  Let $f$ be a transcendental element over $F$ and
  $D$ the derivation of $F(f)$ which is an extension of $D_0$ with $Df=-R(f)$.
  Moreover, let $\tau$ be an isomorphism of $F(f)$ into $F(f)$ such that
  $\tau|_F=\tau_0$ and $\tau f=1+r/f$.
  We shall prove that $\mathcal{U}=(F(f),D,\tau)$ is a DTC overfield of
  $\mathcal{F}$ with $D\tau=s\tau D$.
  Let $R=gY^2+bY-sr\tau(g)$ for brevity.
  We have
  \[
    -s\tau b=\tau^2(sr)\tau(s)s\tau^3(g)+\tau(s)s\tau^2(g)
    -\tau(r)s\tau(g)+s\tau(D(r)/r),
  \]
  and so by the definition of $g$,
  \[
    s\tau b=\tau(r)\tau(s)s\tau^2(g)-s\tau(g)-rg
  \]
  or
  \[
    b+s\tau b=-2s\tau(g)-\frac{Dr}{r}.
  \]
  Hence, by a straightforward calculation, it follows that
  \[
  \begin{aligned}
    &R(f)+\frac{s}{r}f^2\tau(R(f))+\frac{Dr}{r}f\\
    &=gf^2+bf-sr\tau(g)\\
    &\quad+\frac{s}{r}f^2\left\{\tau(g)\left(1+\frac{r}{f}\right)^2+\tau(b)\left(1+\frac{r}{f}\right)-\tau(sr)\tau^2(g)\right\}
    +\frac{Dr}{r}f\\
    &=\left\{g+\frac{s}{r}(\tau(g)+\tau(b)-\tau(sr)\tau^2(g))\right\}f^2\\
    &\quad +\left\{b+\frac{s}{r}(2r\tau(g)+r\tau(b))+\frac{Dr}{r}\right\}f\\
    &=0.
  \end{aligned}
  \]
  Substituting $R(f)=-Df$, we obtain
  \[
    -D(f)-\frac{s}{r}f^2\tau(D(f))+\frac{Dr}{r}f=0,
  \]
  and so
  \[
    s\tau Df=-\frac{r}{f^2}D(f)+\frac{Dr}{f}
    =D\left(1+\frac{r}{f}\right)=D\tau f,
  \]
  which implies $D\tau=s\tau D$.
\end{proof}

\section{Application}\label{application}

In this section, we shall investigate differential transcendence of
solutions of the Riccati equation,
\[
  y(qt)=-qt+\frac{1}{y},
\]
which is associated with the $q$-Airy equation,
\[
  y(q^2t)+qty(qt)-y(t)=0.
\]

Let $C$ be an algebraically closed field and $C(t)$ a rational function field.
Let $\mathcal{L}=(C(t),D_0,\tau_0)$ be a DTC field with
\[D_0|_C=0,\ D_0t=1,\quad \tau_0|_C=\mathrm{id},\ \tau_0t=qt,\ q\in C^\times.\]
We use 
\[
  A=\begin{pmatrix}-qt & 1\\ 1&0\end{pmatrix}\in \mathrm{GL}_2(C(t))
\]
as a matrix associated with the above Riccati equation.
By the result introduced in the author's paper \cite{Sei2017}, we have
the following.

\begin{lemma}\label{2017-05-20p1a}
  If $q$ is not a root of unity,
  then for all $i\geq 1$, $\eq(A,i)/(C(t),\tau_0)$ has no solution
  algebraic over $C(t)$.
\end{lemma}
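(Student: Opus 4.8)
The plan is to reduce the statement, for each fixed $i$, to a transcendence criterion for second-order $q$-difference equations established in \cite{Sei2017}, and then to check the hypotheses of that criterion uniformly in $i$.

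The first move is a change of viewpoint: by the very definitions in Section \ref{normal}, the equation $\eq(A,i)/(C(t),\tau_0)$ coincides with $\eq(A_i,1)/(C(t),\tau_0^i)$, where $A_i=(\tau_0^{i-1}A)\cdots(\tau_0A)A$; both read $\tau_0^i y\,(c^{(i)}y+d^{(i)})=a^{(i)}y+b^{(i)}$ in the entries of $A_i$. Hence it suffices to show that, for every $i\ge 1$, the $q^i$-difference Riccati equation $\eq(A_i,1)/(C(t),\tau_0^i)$ has no solution algebraic over $C(t)$. Since $q$ is not a root of unity, neither is $q^i$, so $(C(t),\tau_0^i)$ is again a $q$-difference field of the type treated in \cite{Sei2017}. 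From the recursion $A_{i+1}=(\tau_0^i A)A_i$ one reads off that all entries of $A_i$ are polynomials in $t$, that $a^{(i)}$ has degree $i$, that $c^{(i)}\ne 0$ (indeed $c^{(1)}=1$ and $c^{(i)}=a^{(i-1)}$ for $i\ge 2$), and that $\det A_i=(-1)^i\in C^\times$. Thus $A_i$ is of form {\rm(F1)} in the sense of Proposition \ref{2017-05-11p3a}, and by Lemma \ref{2017-05-11p1a} we may replace $A_i$ by any conjugate $(\tau_0^i Q)A_i Q^{-1}$ with $Q\in\mathrm{GL}_2(C(t))$ without affecting the existence of a solution algebraic over $C(t)$, because the bijection $f\leftrightarrow g$ of Lemma \ref{2017-05-11p1a} is birational over $C(t)$ and hence preserves algebraicity over $C(t)$. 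Applying Proposition \ref{2017-05-11p3a} normalizes $\eq(A_i,1)$ to the Riccati equation of a second-order linear $q^i$-difference equation whose leading and trailing coefficients are units of $C$, and the same recursion exhibits the poles and zeros of the remaining coefficient --- in particular the reductions of $A_i$ at $t=0$ and at $t=\infty$, which control the local Newton-polygon data.

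At this point I would invoke the transcendence criterion of \cite{Sei2017}: it gives that the normalized equation has no solution algebraic over $C(t)$, and unwinding the two identifications above yields the same for $\eq(A,i)/(C(t),\tau_0)$; letting $i$ range over the positive integers completes the proof. I expect the one genuine difficulty to be the uniformity in $i$: one must verify that, for \emph{every} $i$, the coefficient data produced above meets the hypotheses of the criterion in \cite{Sei2017} --- that the relevant slopes at $t=0$ and $t=\infty$, or the locations of the poles and zeros of the middle coefficient, are of the admissible type. The explicit recursion for the entries of $A_i$ (together with the alternation, already visible in the value of $A_i$ at $t=0$, between a permutation matrix for odd $i$ and the identity for even $i$) should reduce this to a finite, parity-sensitive computation. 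If instead \cite{Sei2017} already states the non-existence of algebraic solutions of all iterates $\eq(A,i)$ for the $q$-Airy matrix $A$ directly, then the reduction above is unnecessary and the proof is simply a reference to that result.
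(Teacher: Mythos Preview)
The paper gives no proof of this lemma at all: it simply prefaces the statement with ``By the result introduced in the author's paper \cite{Sei2017}, we have the following,'' and moves on. Your very last sentence anticipates exactly this --- the lemma is quoted as a direct consequence of \cite{Sei2017}, so the reduction you outline is unnecessary.

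That said, your reduction is mathematically sound as far as it goes. The identification of $\eq(A,i)/(C(t),\tau_0)$ with $\eq(A_i,1)/(C(t),\tau_0^i)$ is correct, the recursion $A_{i+1}=(\tau_0^iA)A_i$ does give $c^{(i+1)}=a^{(i)}$ and $\deg a^{(i)}=i$, and the use of Lemma \ref{2017-05-11p1a} to transport algebraicity under conjugation is legitimate. Where your argument becomes genuinely incomplete is the step ``At this point I would invoke the transcendence criterion of \cite{Sei2017}'': you have not stated what that criterion actually says, and so you cannot verify its hypotheses --- you explicitly flag the uniformity in $i$ as the ``genuine difficulty'' but leave it open. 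If \cite{Sei2017} provides only a criterion for a single Riccati equation (rather than the iterated statement directly), then your reduction is the right shape, but the proof is not finished until the hypotheses are checked; if \cite{Sei2017} already treats the iterates, your reduction is redundant. Either way, what you have written is a proof \emph{sketch} rather than a proof, and the paper itself is content with a bare citation.
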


In his paper \cite{Sei2016}, he defined difference field extensions of
valuation ring type, and introduced several results as follows.

\begin{definition}\label{2014-08-14p1a}
  Let $\mathcal{N}/\mathcal{K}$ be a difference field extension,
  where $\mathcal{N}=(N,\tau)$.
  We say that $\mathcal{N}/\mathcal{K}$ is of valuation ring type
  if there exists a chain of difference field extension,
  \[
    \mathcal{K}=\mathcal{K}_0\subset\mathcal{K}_1\subset\dots\subset
    \mathcal{K}_{n-1}\subset\mathcal{K}_n=\mathcal{N},
  \]
  such that each $\mathcal{K}_i/\mathcal{K}_{i-1}$
  satisfies one of the following.
  \begin{enumerate}
  \item $K_i/K_{i-1}$ is algebraic.
  \item $K_i/K_{i-1}$ is an algebraic function field of one variable,
  and there exists a place $P$ of $K_i/K_{i-1}$ such that
  $\tau^jP\subset P$ for some $j\in\mathbb{Z}_{>0}$.
  \end{enumerate}
\end{definition}

\begin{lemma}[Corollary 6 in \cite{Sei2016}]\label{2014-08-14p7a}
  Let $\mathcal{K}$ be a difference field, and 
  $f$ a solution of $y_1=ay+b$, $a,b\in K$, $a\neq 0$,
  transcendental over $K$.
  Then $\mathcal{K}\gen{f}/\mathcal{K}$ is of valuation ring type.
\end{lemma}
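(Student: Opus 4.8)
The plan is to observe that $\mathcal{K}\gen{f}$ does not really enlarge the field beyond a rational function field, and that on it $\tau$ acts by an affine substitution which fixes the place at infinity.

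First I would note that $\tau f=af+b\in K(f)$ and, inductively, $\tau^n f\in K(f)$ for all $n\geq 0$; hence the underlying field of $\mathcal{K}\gen{f}$ is simply $K(f)$, and $\tau$ restricts to an injective endomorphism of $K(f)$ which extends the operator of $\mathcal{K}$ and satisfies $\tau f=af+b$. Since $f$ is transcendental over $K$, the extension $K(f)/K$ is a (purely transcendental) algebraic function field of one variable. Therefore it suffices to exhibit one place $P$ of $K(f)/K$ with $\tau^j P\subset P$ for some $j\in\mathbb{Z}_{>0}$: the chain $\mathcal{K}\subset\mathcal{K}\gen{f}$ of length one then witnesses that $\mathcal{K}\gen{f}/\mathcal{K}$ is of valuation ring type via case (ii) of Definition \ref{2014-08-14p1a}.

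Next I would take $P=P_\infty$, the place of $K(f)/K$ whose prime element is $1/f$, i.e.\ the unique place at which $f$ has a pole, and check that in fact $\tau P_\infty\subset P_\infty$, so that $j=1$ works. The key computation is that for $0\neq p\in K[f]$ of degree $d$ one has $\tau(p(f))=p^{\tau}(af+b)$, where $p^{\tau}$ is obtained by applying $\tau$ to the coefficients of $p$; because $a\neq 0$ and $\tau$ is injective, $p^{\tau}(af+b)$ again has $f$-degree $d$ with nonzero leading coefficient, so $v_{P_\infty}(\tau(p(f)))=-d=v_{P_\infty}(p(f))$. By multiplicativity this gives $v_{P_\infty}(\tau x)=v_{P_\infty}(x)$ for every $x\in K(f)^{\times}$, hence $x\in P_\infty\Rightarrow\tau x\in P_\infty$, i.e.\ $\tau P_\infty\subset P_\infty$.

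The argument is short, and I do not expect a genuine obstacle: the only point needing care is the first one — recognizing that $\mathcal{K}\gen{f}$ has underlying field exactly $K(f)$, so that the whole question reduces to the behaviour of a single place of $K(f)/K$ under an affine change of variable $f\mapsto af+b$. Once that reduction is made, the hypothesis $a\neq 0$ does all the work, and neither a step of type (i) nor an exponent $j>1$ is needed.
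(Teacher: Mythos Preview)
Your argument is correct. Note, however, that the paper does not supply its own proof of this lemma: it is quoted verbatim as Corollary~6 of \cite{Sei2016}, so there is nothing in the present paper to compare your argument against. Your direct verification---that the underlying field of $\mathcal{K}\gen{f}$ is exactly $K(f)$, and that the affine substitution $f\mapsto af+b$ with $a\neq 0$ preserves $f$-degree and hence the valuation $v_{P_\infty}$, giving $\tau P_\infty\subset P_\infty$ already for $j=1$---is the natural self-contained proof and needs nothing beyond Definition~\ref{2014-08-14p1a}.
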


\begin{remark}
  A chain of difference field extensions of valuation ring type
  is also of valuation ring type.
\end{remark}

\begin{lemma}[Theorem 8 in \cite{Sei2016}]\label{2017-05-20p1b}
  Let $\mathcal{M}=(M,\tau)$ be a difference overfield of $(C(t),\tau_0)$.
  Let $k\in\mathbb{Z}_{>0}$, and suppose that
  $\eq(A,k)/\mathcal{M}$ has a solution in a certain difference field extension
  $\mathcal{N}/\mathcal{M}$ of valuation ring type.
  Then $\eq(A,ki)/\mathcal{M}$ has a solution in $\ol{\mathcal{M}}$
  for some $i\in\mathbb{Z}_{>0}$, where $\ol{\mathcal{M}}$ is 
  the algebraic closure of $\mathcal{M}$ in an algebraic closure 
  $\ol{\mathcal{N}}$ of $\mathcal{N}$.
\end{lemma}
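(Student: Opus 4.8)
The plan is to argue by induction on the length $n$ of a chain
\[
  \mathcal{M}=\mathcal{K}_0\subset\mathcal{K}_1\subset\dots\subset\mathcal{K}_n=\mathcal{N}
\]
realising $\mathcal{N}/\mathcal{M}$ as of valuation ring type, proving the
assertion simultaneously for all difference overfields of $(C(t),\tau_0)$ in the
role of the base field. For $n=0$ there is nothing to do, the given solution
already lying in $M\subseteq\ol{\mathcal{M}}$. For $n\geq 1$, a solution of
$\eq(A,k)/\mathcal{M}$ in $\mathcal{N}$ is also a solution of $\eq(A,k)/\mathcal{K}_1$,
since the entries of $A$ lie in $C(t)\subseteq K_1$, while
$\mathcal{N}/\mathcal{K}_1$ is of valuation ring type of length $n-1$; hence the
inductive hypothesis with base $\mathcal{K}_1$ produces an integer $i_1\geq 1$
and a solution $g$ of $\eq(A,ki_1)/\mathcal{K}_1$ algebraic over $K_1$. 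If $g$ is
algebraic over $M$ --- in particular if the step $\mathcal{K}_1/\mathcal{M}$ is
of the first kind in Definition~\ref{2014-08-14p1a} --- then $g\in\ol{\mathcal{M}}$
and we are done. So the whole problem reduces to the case where the step
$\mathcal{K}_1/\mathcal{M}$ is of the second kind and $g$ is transcendental over
$M$: thus $K_1/M$ is an algebraic function field of one variable carrying a
place $P$ with $\tau^jP\subseteq P$ for some $j\geq 1$.

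For that case I would first normalise. Replacing $\tau$ by a suitable power
$\sigma$ of it --- which by the lemma on iterates in Section~\ref{normal} only
replaces the index $ki_1$ by a multiple of itself, so that $g$ still satisfies
the new equation --- one arranges that $\sigma$ stabilises $P$ and that the
finite extension $K_1(g)$ of $K_1$ is $\sigma$-stable; the latter holds because
the $\sigma$-transform of $g$ is a rational function of $g$ over
$\sigma K_1\subseteq K_1$. Using that such a place $P$ is automatically
$\sigma$-invariant (the alternative being excluded on transcendence-degree
grounds, since it would embed an isomorphic copy of $K_1$ into a residue field
algebraic over $M$), the map $\sigma$ permutes by pull-back the finitely many
places of $K_1(g)$ lying over $P$, so after replacing $\sigma$ by a further
power and passing to a place in one of the resulting cycles we obtain a place
$\mathfrak{q}$ of $K_1(g)$ with
$\sigma\mathcal{O}_{\mathfrak{q}}\subseteq\mathcal{O}_{\mathfrak{q}}$,
$\sigma\mathfrak{q}\subseteq\mathfrak{q}$, whose residue field $R$ is algebraic
over $M$. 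Write $\sigma=\tau^{ki}$ with $i$ a multiple of $i_1$. Then $g$
satisfies the Riccati relation
\[
  \sigma g=\frac{\alpha g+\beta}{\gamma g+\delta},\qquad
  \alpha,\beta,\gamma,\delta\in C(t),\quad \alpha,\gamma\neq 0,
\]
with $\bigl(\begin{smallmatrix}\alpha&\beta\\\gamma&\delta\end{smallmatrix}\bigr)=A_{ki}$;
this is the one place where the specific shape of $A$ is used, the non-vanishing
of the $(1,1)$- and $(2,1)$-entries of the iterates of
$\bigl(\begin{smallmatrix}-qt&1\\1&0\end{smallmatrix}\bigr)$ being read off from
the recursion they obey, their degrees in $t$ strictly increasing.

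The key claim is then $v_{\mathfrak{q}}(g)\geq 0$. For if $v_{\mathfrak{q}}(g)<0$,
then $1/g\in\mathfrak{q}$, hence $\sigma(1/g)=1/\sigma g\in\mathfrak{q}$ and
$v_{\mathfrak{q}}(\sigma g)<0$; but $v_{\mathfrak{q}}(g)<0$ together with
$\alpha,\gamma\neq 0$ and the triviality of $v_{\mathfrak{q}}$ on $C(t)$ forces
$v_{\mathfrak{q}}(\alpha g+\beta)=v_{\mathfrak{q}}(\gamma g+\delta)=v_{\mathfrak{q}}(g)$,
whence $v_{\mathfrak{q}}(\sigma g)=0$, a contradiction. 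Granting
$v_{\mathfrak{q}}(g)\geq 0$, the operator $\sigma$ induces an endomorphism
$\ol{\sigma}$ of $R$, and reducing the polynomial identity
$\sigma g\,(\gamma g+\delta)=\alpha g+\beta$ modulo $\mathfrak{q}$ shows that
$\ol{g}\in R$ satisfies the corresponding Riccati relation over $(R,\ol{\sigma})$.
Since $R$ is algebraic over $M$, there is an embedding $R\hookrightarrow\ol{\mathcal{M}}$
over $M$ carrying $\ol{\sigma}$ to $\tau^{ki}$ (\cite{ZS1958}, Ch.\ II, \S14,
Theorem~33), and the image $h\in\ol{\mathcal{M}}$ of $\ol{g}$ then satisfies
$\tau^{ki}h=(\alpha h+\beta)/(\gamma h+\delta)$, i.e.\ is a solution of
$\eq(A,ki)/\mathcal{M}$ lying in $\ol{\mathcal{M}}$. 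This closes the induction.

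I expect the principal obstacle to be precisely the valuation-theoretic
bookkeeping of the second paragraph: giving ``reduce $g$ modulo a place'' a
clean meaning when $g$ is only algebraic over $K_1$ --- which is what forces the
passage to powers of $\tau$ under which $K_1(g)$, and a place above $P$, become
stable --- together with the automatic invariance of such places and the
tracking of which multiple of $k$ one finally obtains. The conceptual point that
makes the scheme work for this equation, as opposed to a general difference
Riccati equation, is the input $\alpha,\gamma\neq 0$ above: it is what keeps the
reduction at $\mathfrak{q}$ from degenerating the Riccati structure and carrying
$g$ off to the point at infinity, so that $\ol{g}$ is again an honest finite
solution.
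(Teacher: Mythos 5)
First, a remark on the comparison you asked for: the paper does not prove this lemma at all --- it is imported as Theorem~8 of \cite{Sei2016} --- so there is no in-paper argument to measure yours against, and I can only assess the attempt on its own terms. Your overall scheme (induction on the length of the chain; reduction to a single step of the second kind; passing to a power $\sigma$ of $\tau$ that stabilises a place of $K_1(g)$ above $P$; and the use of $a^{(i)},c^{(i)}\neq 0$, which indeed follows from $c^{(i+1)}=a^{(i)}$, $a^{(i+1)}=-q^{i+1}t\,a^{(i)}+a^{(i-1)}$ and the strictly increasing degrees, to force $v_{\mathfrak q}(g)\geq 0$) is coherent, and those steps check out. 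The ``automatic invariance'' you invoke is true but for a more elementary reason than your transcendence-degree aside: if $v_P(x)=0$ and $v_P(\sigma x)=-m<0$, then $x^au\in P$ for every $a\geq 1$ (with $u$ a prime element), whence $v_P(\sigma u)\geq 1+am$ for all $a$, which is absurd; so $\sigma P\subseteq P$ already gives $\sigma\mathcal{O}_P\subseteq\mathcal{O}_P$.

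The genuine gap is the final step. You claim that, $R$ being algebraic over $M$, there is an embedding $R\hookrightarrow\ol{\mathcal{M}}$ over $M$ ``carrying $\ol{\sigma}$ to $\tau^{ki}$'', citing \cite{ZS1958}. That theorem extends a field isomorphism to algebraic extensions; it does not embed a \emph{given} algebraic difference field extension $(R,\ol{\sigma})/(M,\tau^{ki})$ into a \emph{prescribed} $(\ol{M},\ol{\tau}^{ki})$, and in general no such embedding exists: take $M=\mathbb{Q}$, $\tau=\mathrm{id}$, $R=\mathbb{Q}(i)$ with $\ol{\sigma}$ complex conjugation, and $\ol{\tau}=\mathrm{id}$ on $\ol{\mathbb{Q}}$; a compatible embedding would force $\iota(i)=\iota(-i)$. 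So the residue $\ol{g}$ cannot simply be transported into $\ol{\mathcal{M}}$. The repair uses exactly the slack in the exponent that the conclusion allows: let $p$ be the minimal polynomial of $\ol{g}$ over $M$ (degree $d\geq 2$, the case $d=1$ being trivial), $h_1,\dots,h_d$ its roots in $\ol{M}$, and $\rho$ the M\"obius map of $A_N$ with $N=ki$ after your normalisations. Since $p^{\tau^N}(\rho(\ol{g}))=0$ and the roots of $p^{\tau^N}$ in $\ol{M}$ are exactly $\ol{\tau}^Nh_1,\dots,\ol{\tau}^Nh_d$, one obtains a self-map $\pi$ of $\{1,\dots,d\}$ with $\rho(h_s)=\ol{\tau}^Nh_{\pi(s)}$; iterating gives $\rho_j(h_s)=\ol{\tau}^{jN}h_{\pi^j(s)}$ for the M\"obius map $\rho_j$ of $A_{jN}$, and any periodic point $\pi^j(s)=s$ yields $h_s\in\ol{M}$ solving $\eq(A,Nj)/\mathcal{M}$. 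Without some such argument, threaded through each inductive step, the proof as written is incomplete.
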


Here, we shall prove the following.

\begin{theorem}\label{2017-05-20p2a}
  Suppose that $q$ is not a root of unity.
  Let $\mathcal{U}=(U,D,\tau)$ be a DTC overfield of $\mathcal{L}$ with
  $D\tau=q\tau D$, and
  $\mathcal{F}$ a DTC intermediate field of $\mathcal{U}/\mathcal{L}$ such that
  \[
    \{x\in F\;|\; Dx=0\}=\{x\in F\;|\;\tau x=x\}=C
  \]
  and $\mathcal{F}=\mathcal{L}\gen{f_1,\dots,f_n}_\mathcal{U}$, where 
  $f_1,\dots,f_n$ satisfy
  \[
    \tau f_i=\alpha_if_i+\beta_i,\quad 
    \alpha_i,\beta_i\in\mathcal{L}\gen{f_1,\dots,f_{i-1}}_\mathcal{U},
    \ \alpha_i\neq 0,
  \]
  and $S=\{D^jf_i\;|\;1\leq i\leq n,\,j\geq 0\}$ are algebraically independent
  over $C(t)$.
  Then $\eq(A,1)/(C(t),\tau)$ has no solution differentially algebraic 
  over $F$ in $\mathcal{U}$.
\end{theorem}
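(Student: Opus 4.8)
The plan is to suppose, for contradiction, that $\eq(A,1)/(C(t),\tau)$ has a solution $f\in\mathcal{U}$ differentially algebraic over $F$, and to trace it down to a rational identity over $C(t)$ that fails because $q$ is not a root of unity. First I would normalize. The matrix $A=\begin{pmatrix}-qt&1\\1&0\end{pmatrix}$ is of type {\rm(F2)} in Proposition \ref{2017-05-11p3a}, so part {\rm(ii)} there gives $Q\in\mathrm{GL}_2(C(t))$ with $(\tau Q)AQ^{-1}=eB$, where $B=\begin{pmatrix}1&r\\1&0\end{pmatrix}$, $e=-q^2t$ and $r=1/(q^3t^2)$. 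The element $h\in F(f)$ determined by $\eq(h;Q;f)$ is a solution of $\eq(B,1)/(F,\tau)$ in $\mathcal{U}$ by Lemma \ref{2017-05-11p1a} (the scalar $e$ does not change $\eq$), and $\mathcal{F}\gen{h}=\mathcal{F}\gen{f}$, so $h$ is differentially algebraic over $F$ as well, while $\eq(A,i)$ and $\eq(B,i)$ have solutions algebraic over any given difference field simultaneously.

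Next I would exclude algebraic solutions over $F$. From $D\tau=q\tau D$ and $\tau f_i=\alpha_if_i+\beta_i$, the Leibniz rule gives $\tau(D^jf_i)=(\alpha_i/q^j)D^jf_i+(\text{an element of the field generated over }C(t)\text{ by the earlier }D^lf_k)$, with nonzero leading coefficient, and $D^jf_i$ is transcendental over that field because $S$ is algebraically independent over $C(t)$; hence, by Lemma \ref{2014-08-14p7a} and the Remark after it, $(F,\tau)/(C(t),\tau_0)$ is of valuation ring type. If some $\eq(A,i)/(F,\tau)$ had a solution $w$ algebraic over $F$, then $(F(w),\tau)/(C(t),\tau_0)$ would still be of valuation ring type (clause {\rm(i)} of Definition \ref{2014-08-14p1a}), and Lemma \ref{2017-05-20p1b} (with $\mathcal{M}=(C(t),\tau_0)$) would give a solution of $\eq(A,ij)/(C(t),\tau_0)$ algebraic over $C(t)$ for some $j\geq1$, contradicting Lemma \ref{2017-05-20p1a}. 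So no $\eq(A,i)$, hence no $\eq(B,i)$, has a solution over $(F,\tau)$ algebraic over $F$; in particular $h$ is transcendental over $F$. Now I would apply Theorem \ref{2017-04-28p1a} to the DTC field $(F,D,\tau)$ and the matrix $B$: since $S\cup\{t\}$ is algebraically independent over $C$, the underlying field is $F=K(t)$ with $K=C(S)$, an algebraic function field of one variable over $K$, the place $P$ of $F/K$ with prime element $1/t$ has $v_P(\tau_0^ir)=v_P(q^{-3-2i}t^{-2})=2>0$ for all $i\geq0$, and $Dr=D_0r\neq0$. Theorem \ref{2017-04-28p1a} (with $s=q$) then produces $g\in F$ satisfying, after clearing denominators,
\[
  q^{-4}\tau^3(g)+(q^{-3}+q^2t^2)\tau^2(g)-(q^{-4}+qt^2)\tau(g)-q^{-3}g=2t.
\]

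The contradiction must come from showing that this third-order linear difference equation has no solution in $F$, and I would argue in two steps. \emph{(a) No solution in $C(t)$.} A pole of order $e\geq1$ at $t=0$ forces, on comparing coefficients of $t^{-e}$, the relation $q^{-4-3e}+q^{-3-2e}-q^{-4-e}-q^{-3}=0$, i.e.\ $(1-q^{2e})(1+q^{1+e})=0$, impossible for $q$ not a root of unity; a pole at $a\in C^\times$ of maximal $\gen{q}$-height makes only the term $-q^{-3}g$ singular at $t=a$ on the left, while the right is regular there; so a rational solution is a polynomial of degree $d\geq0$, but then the coefficient of $t^{d+2}$ on the left is $q^{1+d}(q^{1+d}-1)$ times the leading coefficient of $g$, nonzero since $q$ is not a root of unity, while the right has degree $1$ -- hence the solution is $0$, which is absurd. \emph{(b) Descent from $F$ to $C(t)$.} Along $C(t)=F_0\subset\cdots\subset F_n=F$ with $F_m=\mathcal{L}\gen{f_1,\dots,f_m}_\mathcal{U}=F_{m-1}(D^jf_m:j\geq0)$, the operator $\tau$ acts affinely and triangularly on the algebraically independent generators $D^jf_m$ over $F_{m-1}$ and, since $\{x\in F:\tau x=x\}=C$, introduces no new $\tau$-constants at any level; this should force a solution lying in $F_m$ to be free of the $D^jf_m$, hence to lie in $F_{m-1}$, and by iteration in $C(t)$. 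Combining (a) and (b) the equation has no solution in $F$, contradicting the preceding paragraph, so no such $f$ exists.

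I expect step (b) to be the main obstacle. It is not a general phenomenon that adjoining a solution of a first-order linear difference equation leaves the solution spaces of unrelated linear difference equations over the base unchanged (the defining equation of such an element is itself a counterexample), so the algebraic independence of $S$ together with $\{x\in F:\tau x=x\}=C$ must be used essentially; concretely one has to preclude a ``hyperexponential'' solution of the homogeneous part of the equation over each $F_{m-1}$ -- the analogue of the residue and degree bookkeeping in (a), once more exploiting that $q$ is not a root of unity. Attempting to bypass the descent through Proposition \ref{2017-05-25p1a} and Lemma \ref{2017-05-20p1b} seems not to help, since the Riccati solution furnished by Proposition \ref{2017-05-25p1a} need not generate a difference extension of valuation ring type.
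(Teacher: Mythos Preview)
Your argument through the application of Theorem \ref{2017-04-28p1a} is essentially the paper's. The divergence---and the gap you yourself flag---is in how you handle the resulting third-order linear difference equation for $g\in F$. You split this into (a) no solution in $C(t)$ and (b) a descent $F\to C(t)$; the paper does neither. Instead it works directly in $F=K(t)$ with $K=C(S)$: expand $g=\sum_{i\ge m}a_i t^{-i}$ with $a_i\in K$, $a_m\neq 0$, and read off the coefficient of $t^{-m}$ in the equation. This produces a first-order relation for $a_m$ alone: $\tau a_m=q^{m-1}a_m$ if $m\neq 1$, and $\tau a_1=a_1+2$ if $m=1$. In the first case $a_m/t^{m-1}\in F$ is $\tau$-fixed, hence lies in $C$ by hypothesis; since $a_m\in K$ and $t$ is transcendental over $K$, this forces $a_m=0$, a contradiction. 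In the second case, differentiating gives $q\tau(Da_1)=Da_1$, so $t\,Da_1\in F$ is $\tau$-fixed, hence in $C$, whence $Da_1=0$; then the other hypothesis $\{x\in F:Dx=0\}=C$ gives $a_1\in C$, contradicting $\tau a_1=a_1+2$.

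Thus the two constants hypotheses are precisely what replaces your missing step (b): rather than pushing the whole solution down to $C(t)$, one extracts a single Laurent coefficient $a_m\in K\subset F$ satisfying a first-order difference (and, after one differentiation, differential) relation whose solutions in $F$ are forced into $C$. Your step (a) is correct but not needed, and your proposed (b)---ruling out hyperexponential solutions of the homogeneous equation over each $F_{m-1}$---is both harder and avoidable. One smaller point earlier: $F/C(t)$ is generated by the infinite set $S$, so ``$(F,\tau)/(C(t),\tau_0)$ is of valuation ring type'' is not literally licensed by Definition \ref{2014-08-14p1a}; the paper handles this by fixing a putative algebraic solution first, truncating to finitely many $D^jf_i$ over which it is already algebraic, and running the valuation-ring-type argument inductively along the tower $L_0\subset\cdots\subset L_n$.
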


\begin{proof}
  Let $L_k$ be the underlying field of $\mathcal{L}\gen{f_1,\dots,f_k}_\mathcal{U}$,
  and $S_k$ denote the set $\{D^jf_i\;|\;1\leq i\leq k,\,j\geq 0\}$.
  We shall prove $L_k=C(t)(S_k)$ by induction.
  It is obvious in the case $k=0$. Suppose $k\geq 1$ and that the claim 
  is true for $k-1$.
  By $D^i\tau=q^i\tau D^i$ $(i\geq 1)$ and
  \[
    \tau f_k=\alpha_kf_k+\beta_k,
    \quad \alpha_k,\beta_k\in L_{k-1}=C(t)(S_{k-1}),
  \]
  we find
  \[
  \begin{aligned}
    q^i\tau f_k^{(i)}
    &=D^i(\alpha_kf_k+\beta_k)\\
    &=\alpha_kf_k^{(i)}
    +\text{(an element of $L_{k-1}(f_k,f_k',\dots,f_k^{(i-1)})$)},
  \end{aligned}
  \]
  which implies $L_k=L_{k-1}(f_k,f_k',\dots)=C(t)(S_k)$.

  We will find that for all $i\geq 1$, $\eq(A,i)/(F,\tau)$ 
  has no solution algebraic over $F$.
  Let $m$ be the maximum such that
  for all $i\geq 1$, $\eq(A,i)/(L_m,\tau)$
  has no solution algebraic over $L_m$.
  It is well-defined, for the statement is true for $m=0$.
  If $m<n$, then for a certain $k\geq 1$, 
  $\eq(A,k)/(L_{m+1},\tau)$ would have a solution $f$ algebraic
  over $L_{m+1}$.
  Since $f$ is algebraic over $L_{m+1}=L_m(f_{m+1},f_{m+1}',\dots)$,
  $f$ is algebraic over $L_m(f_{m+1},f_{m+1}',\dots,f_{m+1}^{(l)})$
  for some $l$.
  Let $\mathcal{N}$ be the difference field defined by
  \[
    \mathcal{N}=(L_m,\tau)\gen{f_{m+1},\dots,f_{m+1}^{(l)},f}
    \subset(L_{m+1},\tau)\gen{f}.
  \]
  From the above discussion about $\tau f_k^{(i)}$, by Lemma \ref{2014-08-14p7a},
  we see that $\mathcal{N}/(L_m,\tau)$ is of valuation ring type.
  By Lemma \ref{2017-05-20p1b}, 
  there exists $i\in\mathbb{Z}_{>0}$ such that $\eq(A,ki)/(L_m,\tau)$
  has a solution algebraic over $L_m$, which contradicts the definition of $m$.
  Therefore we conclude $m=n$, which proves the above claim.
  
  Let $K=C(S)$.
  We find $F=C(t)(S)=K(t)$ and that $t$ is transcendental over $K$.
  By Proposition \ref{2017-05-11p3a} (ii), we have
  \[
    (\tau P)AP^{-1}=-q^2t\begin{pmatrix}1&(q^3t^2)^{-1}\\1&0\end{pmatrix},
    \quad P=\begin{pmatrix}-qt&1\\-qt&0\end{pmatrix}.
  \]
  Let
  \[
    B=\begin{pmatrix}1&r\\1&0\end{pmatrix},\quad r=\frac{1}{q^3t^2}.
  \]
  For all $i\geq 1$, $\eq(B,i)/(F,\tau)$ has no solution algebraic over $F$.
  Indeed, assume that $\eq(B,k)/(F,\tau)$ has a solution $f$
  algebraic over $F$.
  Let $\mathcal{N}=(F,\tau)\gen{f}$ and
  $A'=(\tau P)AP^{-1}=-q^2tB$.
  Since it follows that
  \[
    A'_k=(\tau^{k-1}A')(\tau^{k-2}A')\cdots A'
    =eB_k,\quad e\in C(t)^\times,
  \]
  $f\in \mathcal{N}$ is a solution of $\eq(A',k)/(F,\tau)$.
  Hence, by Lemma \ref{2017-05-11p1a}, we find that 
  there exists $g\in F(f)$ such that $\eq(g;P^{-1};f)$ holds and 
  $g\in\mathcal{N}$ is a solution of $\eq((\tau P^{-1})A'P,k)/(F,\tau)$,
  where $(\tau P^{-1})A'P=A$.
  Since $f$ is algebraic over $F$, $g$ is a solution of $\eq(A,k)/(F,\tau)$
  algebraic over $F$, which is impossible.
  
  To prove this theorem, we assume that $\eq(A,1)/(C(t),\tau)$ has
  a solution $f\in\mathcal{U}$ differentially algebraic over $F$.
  We have
  \[
    Dr=-\frac{2}{q^3t^3}\neq 0
  \]
  and for all $i\geq 0$,
  \[
    v_\infty(\tau^ir)=v_\infty\left(\frac{1}{q^{2i+3}t^2}\right)=2>0,
  \]
  where $v_\infty$ is the normalized discrete valuation 
  associated with the place $P_\infty$ of $F/K$.
  By Lemma \ref{2017-05-11p1a},
  there exists $g\in F(f)$ such that $\eq(g;P;f)$ holds and
  that it is a solution of $\eq((\tau P)AP^{-1},1)/(F,\tau)$ in $\mathcal{U}$.
  The latter implies that $g\in\mathcal{U}$ is a solution of
  $\eq(B,1)/(F,\tau)$.
  Since $g,g',\ldots$ are elements of the differential field $F(f,f',\dots)$, 
  we find
  \[
    \trdeg F(g,g',\dots)/F\leq \trdeg F(f,f',\dots)/F<\infty,
  \]
  and so $g$ is differentially algebraic over $F$.
  By Theorem \ref{2017-04-28p1a},
  there exists $h\in F=K(t)$ such that
  \[
    q^3\tau^2(r)\tau^3(h)+q^2(\tau(r)+1)\tau^2(h)
    -q(\tau(r)+1)\tau(h)-rh+q\tau(r'/r)=0,
  \]
  from which we have
  \[
    \frac{1}{q^4t^2}\tau^3(h)+\left(\frac{1}{q^3t^2}+q^2\right)\tau^2(h)
    -\left(\frac{1}{q^4t^2}+q\right)\tau(h)-\frac{1}{q^3t^2}h
    -\frac{2}{t}=0.
  \]
  Since $h\in K(t)$ is non-zero, it is expressed as
  \[
    h=\sum_{i=m}^\infty a_i\frac{1}{t^i},\quad a_i\in K,\ a_m\neq 0.
  \]
  Hence we obtain
  \[
  \begin{aligned}
    &\frac{1}{q^4t^2}\sum_{i=m}^\infty\frac{\tau^3a_i}{q^{3i}}\frac{1}{t^i}
    +\left(q^2+\frac{1}{q^3t^2}\right)\sum_{i=m}^\infty\frac{\tau^2a_i}{q^{2i}}\frac{1}{t^i}\\
    &\quad -\left(q+\frac{1}{q^4t^2}\right)\sum_{i=m}^\infty\frac{\tau a_i}{q^i}\frac{1}{t^i}
    -\frac{1}{q^3t^2}\sum_{i=m}^\infty a_i\frac{1}{t^i}-\frac{2}{t}=0.
  \end{aligned}
  \]
  
  In the case $m\neq 1$, by comparing the coefficients of $1/t^m$,
  it follows that
  \[
    q^2\frac{\tau^2a_m}{q^{2m}}-q\frac{\tau a_m}{q^m}=0,
  \]
  from which we have
  \[
    \tau a_m=q^{m-1}a_m.
  \]
  Dividing by $t^{m-1}$, we find
  \[
    \tau\left(\frac{a_m}{t^{m-1}}\right)=\frac{a_m}{t^{m-1}}\in F,
  \]
  which implies $a_m/t^{m-1}\in C\subset K$, a contradiction.
  
  In the case $m=1$, by comparing the coefficients of $1/t^m$, we obtain
  \[
    q^2\frac{\tau^2a_m}{q^{2m}}-q\frac{\tau a_m}{q^m}-2=0,
  \]
  and so
  \begin{equation}\label{2017-05-20p10a}
    \tau a_1=a_1+2.
  \end{equation}
  Differentiating it, we find
  \[
    q\tau D(a_1)=D\tau a_1=D(a_1)\in F,
  \]
  and multiplying it by $t$,
  \[
    \tau(D(a_1)t)=D(a_1)t\in F.
  \]
  Hence $D(a_1)t\in C\subset K$, which implies $D(a_1)=0$.
  Since $a_1\in F$, we conclude $a_1\in C$, which contradicts
  the above equation \eqref{2017-05-20p10a}.
  We found a contradiction in any case,
  which proves this theorem.
\end{proof}

\begin{corollary}\label{2017-07-17p1a}
  Suppose that $q$ is not a root of unity.
  Let $\mathcal{U}=(U,D,\tau)$ be a DTC overfield of $\mathcal{L}$ with
  $D\tau=q\tau D$, and
  $\mathcal{F}$ a DTC intermediate field of $\mathcal{U}/\mathcal{L}$ such that
  \[
    \{x\in F\;|\; Dx=0\}=\{x\in F\;|\;\tau x=x\}=C
  \]
  and $\mathcal{F}=\mathcal{L}\gen{f_1,\dots,f_n}_\mathcal{U}$, where 
  $f_1,\dots,f_n$ satisfy
  \[
    \tau f_i=\alpha_if_i+\beta_i,\quad 
    \alpha_i,\beta_i\in C(t),
    \ \alpha_i\neq 0.
  \]
  Then $\eq(A,1)/(C(t),\tau)$ has no solution differentially algebraic 
  over $F$ in $\mathcal{U}$.
\end{corollary}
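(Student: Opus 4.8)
The plan is to deduce this from Theorem~\ref{2017-05-20p2a} after shrinking the list of generators. The only hypothesis of Theorem~\ref{2017-05-20p2a} not visibly at hand is the algebraic independence of $S=\{D^jf_i\}$ over $C(t)$: the triangularity condition holds trivially since $\alpha_i,\beta_i\in C(t)\subseteq\mathcal{L}\gen{f_1,\dots,f_{i-1}}_{\mathcal{U}}$, and the constancy conditions are assumed. Note also that, from $D\tau=q\tau D$ and $\tau f_i=\alpha_if_i+\beta_i$, one gets $\tau(D^jf_i)\in C(t)(f_i,Df_i,\dots,D^jf_i)$ for every $i,j$; consequently the underlying field of any $\mathcal{L}\gen{f_{i_1},\dots,f_{i_k}}_{\mathcal{U}}$ equals $C(t)(D^jf_{i_s}:1\le s\le k,\ j\ge 0)$, and in particular $F=C(t)(D^jf_i:1\le i\le n,\ j\ge 0)$.

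I would select the generators greedily. Process $f_1,\dots,f_n$ in order, maintaining a DTC intermediate field $\mathcal{M}_k$ of $\mathcal{U}/\mathcal{L}$ generated over $\mathcal{L}$ by the elements selected so far; start with $\mathcal{M}_0=\mathcal{L}$. When $f_k$ is examined, with $M_{k-1}$ the underlying field of $\mathcal{M}_{k-1}$: if $f_k,Df_k,D^2f_k,\dots$ are algebraically independent over $M_{k-1}$, append $f_k$ to the list; otherwise discard it. Let $\mathcal{M}=\mathcal{M}_n=\mathcal{L}\gen{g_1,\dots,g_m}_{\mathcal{U}}$ be the result, with underlying field $M$. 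Then I would verify: (a) by the observation in the preceding paragraph, at the moment $g_l$ is appended the current underlying field is exactly $C(t)(D^jg_{l'}:l'<l,\ j\ge 0)$; (b) hence, by induction on $l$, the set $\{D^jg_l:1\le l\le m,\ j\ge 0\}$ is algebraically independent over $C(t)$, since it is obtained by adjoining to a transcendence basis of $C(t)(D^jg_{l'}:l'<l)/C(t)$ a family algebraically independent over that field; (c) each discarded $f_k$ is differentially algebraic over $M_{k-1}$ and a fortiori over $M\supseteq M_{k-1}$, so $\trdeg M(D^jf_k:j\ge 0)/M<\infty$, whence, since $D^jg_l\in M$ for all $l,j$ and $F=M(D^jf_k:f_k\ \text{discarded},\ j\ge 0)$, we get $\trdeg F/M<\infty$.

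Now I would apply Theorem~\ref{2017-05-20p2a} to the DTC intermediate field $\mathcal{M}=\mathcal{L}\gen{g_1,\dots,g_m}_{\mathcal{U}}$ (when $m=0$ this is $\mathcal{L}$ itself, and $\{x\in C(t):\tau_0x=x\}=C$ because $q$ is not a root of unity). Its hypotheses are met: $\{x\in M:Dx=0\}=\{x\in M:\tau x=x\}=C$ as $C\subseteq M\subseteq F$; each $g_l$ satisfies $\tau g_l=\gamma_lg_l+\delta_l$ with $\gamma_l,\delta_l\in C(t)\subseteq\mathcal{L}\gen{g_1,\dots,g_{l-1}}_{\mathcal{U}}$ and $\gamma_l\neq 0$; and $\{D^jg_l\}$ is algebraically independent over $C(t)$ by (b). Therefore $\eq(A,1)/(C(t),\tau)$ has no solution differentially algebraic over $M$ in $\mathcal{U}$. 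If it had a solution $\varphi\in\mathcal{U}$ differentially algebraic over $F$, then $\trdeg M(\varphi,D\varphi,\dots)/M\le\trdeg F(\varphi,D\varphi,\dots)/F+\trdeg F/M<\infty$ by (c), so $\varphi$ would be differentially algebraic over $M$, a contradiction; this proves the corollary.

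The step that will need the most care is (c), the bound $\trdeg F/M<\infty$. It relies on the fact that the discarding criterion automatically makes each omitted generator differentially algebraic over the field accumulated so far, and that differential algebraicity is preserved when the base is enlarged from $M_{k-1}$ to $M$. I want to stress that one never needs an omitted $f_k$ to be algebraic (rather than merely differentially algebraic) over $M$, and no hypertranscendence statement for first-order linear difference equations is invoked: finite transcendence degree of $F/M$ is precisely what transports differential algebraicity of $\varphi$ down to $M$, where Theorem~\ref{2017-05-20p2a} delivers the contradiction.
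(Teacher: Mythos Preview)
Your proposal is correct and follows essentially the same route as the paper. The paper compresses your greedy selection into a single ``we may suppose'' (reorder so that $\{D^jf_i:1\le i\le m,\ j\ge 0\}$ is algebraically independent over $C(t)$ and each remaining $f_i$ is differentially algebraic over that field), then uses the identical transcendence-degree inequality $\trdeg F'(g,Dg,\dots)/F'\le\trdeg F(g,Dg,\dots)/F+\trdeg F/F'<\infty$ to descend differential algebraicity to $F'$ and invoke Theorem~\ref{2017-05-20p2a}; your version simply spells out the selection step and the identification $F=C(t)(D^jf_i)$ more carefully.
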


\begin{proof}
  We may suppose that $S=\{D^jf_i\;|\;1\leq i\leq m,\,j\geq 0\}$
  are algebraically independent over $C(t)$ and that
  $f_i,Df_i,\dots$ are algebraically dependent
  over $C(t)(S)$ for all $i\geq m+1$.
  Let \[\mathcal{F}'=\mathcal{L}\gen{f_1,\dots,f_m}_\mathcal{U}=(F',D,\tau),\]
  where $F'=C(t)(S)$.
  Assume that $\eq(A,1)/(C(t),\tau)$ has a solution $g\in\mathcal{U}$
  differentially algebraic over $F$.
  Since we have $\trdeg F(g,Dg,\dots)/F<\infty$ and $\trdeg F/F'<\infty$
  by 
  $F=C(t)(\{D^jf_i\;|\;1\leq i\leq n,\,j\geq 0\})$,
  we find $\trdeg F(g,Dg,\dots)/F'<\infty$.
  Hence $\trdeg F'(g,Dg,\dots)/F'<\infty$ holds,
  which means that $g$ is differentially algebraic over $F'$ in $\mathcal{U}$.
  However, Theorem \ref{2017-05-20p2a} says that
  $\eq(A,1)/(C(t),\tau)$ has no solution differentially algebraic over $F'$
  in $\mathcal{U}$. Therefore we found a contradiction.
\end{proof}

\begin{acknowledgement}
  This work was partially supported by JSPS KAKENHI Grant Number 26800049.
\end{acknowledgement}

\begin{flushleft}
  Seiji Nishioka\\
  Faculty of Science, Yamagata University\\
  1-4-12 Kojirakawa-machi, Yamagata-shi\\
  990-8560, Japan\\
  e-mail: nishioka@sci.kj.yamagata-u.ac.jp
\end{flushleft}

\end{document}